\documentclass{amsart}
\usepackage{graphicx}
\usepackage{color}
\usepackage{amssymb}
\usepackage{hyperref}
\usepackage[utf8]{inputenc}
\usepackage[T1]{fontenc}
\usepackage[all]{xy}
\vfuzz2pt 
\hfuzz2pt 
\newtheorem{thm}{Theorem}[section]
\newtheorem{cor}[thm]{Corollary}
\newtheorem{lem}[thm]{Lemma}
\newtheorem{prop}[thm]{Proposition}
\theoremstyle{definition}
\newtheorem{defn}[thm]{Definition}
\theoremstyle{remark}

\numberwithin{equation}{section}

\newcommand{\rs}{Z_P} 

\newcommand{\C}{\mathbb{C}}
\newcommand{\R}{\mathbb{R}}
\newcommand{\Q}{\mathbb{Q}}
\newcommand{\Z}{\mathbb{Z}}

\newcommand{\X}{{\mathcal {X}}}

\newcommand{\D}{{\mathcal {D}}}

\newcommand{\Rv}{\mathcal{R}_3}
\newcommand{\Rp}{\mathcal{R}_{24}}
\newcommand{\rv}{r_3}
\newcommand{\rp}{r_{24}}

\newcommand{\rvprim}{r_{3, prim}}

\newcommand{\adot}{\!\cdot\!} 

\DeclareMathOperator{\disc}{disc}
\DeclareMathOperator{\gr}{Gr_{2,4}(\Z)}
\DeclareMathOperator{\PGL}{PGL_{2}}
\DeclareMathOperator{\PO}{PO(2)}
\DeclareMathOperator{\vol}{vol}
\DeclareMathOperator{\GL}{GL}
\DeclareMathOperator{\cl}{\mathcal C\ell}
\DeclareMathOperator{\Sp}{Sp}

\newcommand{\id}{I_4}

\DeclareMathOperator{\diag}{diag}
\DeclareMathOperator{\re}{Re}
\DeclareMathOperator{\gen}{gen}
\DeclareMathOperator{\rk}{rk}



\begin{document}

\title{Planes in $\Z^4$ and Eisenstein series}
\author{Gautam Chinta}%
\address{Department of Mathematics, The City College of New York, New York, NY 10031}
\email{gchinta@ccny.cuny.edu}
\author{Valdir Pereira J\'{u}nior}%
\address{Department of Mathematics, The City College of New York, New York, NY 10031}
\email{valdirjr@impa.br}%

\subjclass{Number theory}%
\keywords{periods of automorphic forms, representation numbers}%


\begin{abstract}  We study the number of two-dimensional sublattices of $\Z^4$ of a fixed covolume and construct the associated Dirichlet series.  The latter is shown to be related to Eisenstein series on both $\GL_4$ and its metaplectic double cover.
\end{abstract}
\maketitle


\section{Introduction}

The study of two dimensional sublattices in the integer lattice $\Z^4$ has many parallels with the study of the sums of three integer squares.  The result of Legendre and Gauss states that a positive integer $n$ can be written as a sum of three squares if and only if $n$ is not of the form $4^a(8b-1)$ for $a,b$ nonnegative integers, i.e. if $4\nmid n$, then $n$ is a sum of three squares precisely when $n\not\equiv 7 \mod 8$.  Gauss shows further that the number of representations of $n$ as a sum of three integer squares is related to the class number of binary quadratic forms of discriminant $-4n$.

Also of interest is the distribution of these representations projected onto the unit sphere as $n\to\infty.$  That is, let
\begin{equation}
  \label{eq:1}
\Rv(n)=\{(x,y,z)\in\Z^3: x^2+y^2+z^2=n\}
\end{equation}
be the set of
representations of $n$ as a sum of three squares and let
\begin{equation}
  \label{eq:3}
\rv(n)=\#\Rv(n).
\end{equation}
In influential work begun in the 1950's, Linnik considers the distribution of the points $\{v/\sqrt{n}:v\in\Rv(n)\}$ and shows that these points become equidistributed on the unit sphere as $n\to\infty$, under suitable congruence congruence conditions, see \cite{Li}.  The proof introduces what is now called \emph{Linnik's ergodic method}.  Using completely different techniques from the theory of automorphic forms, Duke \cite{du} gave a new proof of Linnik's result and further removed Linnik's congruence assumption.  Duke's proof relies on work of Iwaniec \cite{iw} on nontrivial estimates for Fourier coefficients of half-integral weight modular forms.

The coefficients $\rv(n)$ arise naturally in the study of half-integral weight modular forms and, more generally, in the study of Eisenstein series on metaplectic covering groups.  First, these coefficients are the Fourier coefficients of a classical weight 3/2 theta series.  The associated Dirichlet series occurs as a Fourier-Whittaker coefficient of a metaplectic Eisenstein series on the metaplectic double cover on $\GL(3)$, see e.g. \cite{CO2}.  A generalization of this last statement is the main result of this paper.

Now consider the four dimensional integer lattice $\Z^4$ equipped with the standard inner product and associated quadratic form $Q(x_1,x_2,x_3,x_4)=x_1^2+x_2^2+x_3^2+x_4^2.$  The \emph{discriminant} of a two dimensional sublattice $L$ of $\Z^4$ is defined to be the discriminant of the binary quadratic form $Q|_L$ obtained by restricting $Q$ to $L$.  Equivalently, the discriminant of $L$ is $-4$ times the square of the volume of the quotient $W/L$, where $W$ is the two dimensional subspace of $\R^4$ containing $L$.  The lattice $L$ is called \emph{primitive} if $L=\Q\cdot L\cap \Z^4,$ i.e. $L$ is generated by some vectors of a basis of $\Z^4.$
Let $\gr$ be the set of primitive two dimensional sublattices of $\Z^4.$  Define
\begin{equation}
  \label{eq:2}
\Rp(n)=\{L\in \gr:\disc(L)=-4n \}.
\end{equation}
and
\begin{equation}
  \label{eq:4}
\rp(n)=\#\Rp(n).
\end{equation}
The paper of Aka, Einsiedler and Wieser \cite{AEW19} presents several results concerning the set $\Rp(n)$ analogous to those of Legendre, Gauss and Linnik about points on the sphere. Among other results, they prove
\begin{itemize}
\item $\Rp(n)$ is nonempty precisely when $n$ is in $n\not\equiv 0,7,12$ or $15$ mod $16.$
\item $\rp(n)$ is related to $\rv(n)^2$, and hence, to the square of the class number of binary quadratic forms of discriminant $-4n$
\item To each $L\in\Rp(n)$ they naturally associate four CM-points $z_1,z_2,z_3,z_4$ on the surface $\X:=\PGL(\Z)\backslash \PGL(\R)/\PO$ and conjecture that the set $\{(L, z_1,z_2,z_3, z_4)|L\in \Rp(n)\}$ becomes equidistributed with respect to the natural uniform measure on $\gr\times\X^4$ as $n\to\infty, n\in \D.$  They prove the equidistribution under certain congruence conditions and on average.
\end{itemize}
We give the definition of the four associated CM-points in Section \ref{sec:4cm}.

In the present paper, we add to this list of analogous results by showing that the representation numbers $\rp(n)$ show up in the Fourier-Whittaker expansion of a metaplectic Eisenstein series on the metaplectic double cover of $\GL_4.$  On the one hand, it is clear that the degenerate \emph{nonmetaplectic} $\GL_4(\Z)$ Eisenstein series associated to the $(2,2)$ parabolic evaluated at the identity matrix can be expressed in terms of the Dirichlet series
\begin{equation}
  \label{eq:5}
\sum_n \frac{\rp(n)}{n^s}
\end{equation}
since this Eisenstein series is a sum over $\gr$.  On the other hand, expressing the Whittaker function of a metaplectic Eisenstein series in terms of (\ref{eq:5}) is more involved and is presented in Section \ref{sec:wmds} in the course of the proof of our main result Theorem \ref{thm:PQ}.

This result falls under a conjectural framework developed by Jacquet  \cite{Jacq91}, who conjectured a relative trace formula identity relating orthogonal periods of an automorphic form on $\GL_n$ to Whittaker functions of an associated form on the metaplectic double cover of $\GL_n$.
Actually, Jacquet's conjecture is for cuspforms but the same can be conjectured for Eisenstein series, and special cases of this are more amenable to direct computation, as in the $\GL_3$ example described in Chinta-Offen \cite{CO2} and the $\GL_4$ example presented here.  Both these works depend heavily on the explicit descriptions of Whittaker coefficients of metaplectic Eisenstein series given in \cite{bbcfh, bbfh, cg-inv}.

In the course of the proof our main Theorem, we prove several results which complement those of Aka-Einsiedler-Wieser \cite{AEW19} from an arithmetic perspective.  Namely,
\begin{itemize}
\item In Proposition \ref{prop:r} we give an explicit formula for $\rp(n)$.  The main ideas for this are already in Section 2.4 of \cite{AEW19} but the precise expression (\ref{Formulrd}) is needed in our proof Theorem  \ref{thm:PQ}.
\item As $L$ ranges over two dimensional sublattices of $\Z^4$ of squarefree discriminant $n\equiv 1 \pmod {4}$, we show in Theorem \ref{WhenExPairLat} that $L$ and its orthogonal complement $L^\bot$ are related by the condition that
\begin{equation}
  \label{eq:11}
[Q|_L]\cdot [Q|_{L^\bot}]
\end{equation}
lies in a fixed genus class in the class group $\cl(-4n).$  Here, $[Q|_L]$ denotes the class of the binary quadratic form obtained by restricting $Q$ to $L$.
\item  The binary quadratic forms associated to the accidental CM points $z_3$ and $z_4$ are shown to be Legendre compositions of $[Q|_L]$ and $[Q|_{L^\bot}]$ (Theorem \ref{CompOrtLatt}).
\end{itemize}

We conclude this introduction with a brief outline of the paper.  Section \ref{sec:genus} gives the definition of genus classes and how they arise in the computation of an orthogonal period of an automorphic form on $\GL_4$.  Section \ref{sec:eisenstein} introduces the Eisenstein series associated to the $(2,2)$ parabolic of $\GL_4.$  Sections \ref{sec:genus} and \ref{sec:eisenstein} closely follow the presentation in Chinta-Offen \cite{CO1}, where an analogous problem for Eisenstein series over a imagainary quadratic field is studied.  In Section \ref{sec:rep} we show that the Eisenstein series can be written as a Dirichlet series whose coefficients count two dimensional sublattices of $\Z^4$, and in Section \ref{sec:formula} we give an explicit formula for these representation number in a special case.   Important for our presentation in these sections is the \emph{Klein map} introduced in Aka-Einsiedler-Wieser \cite{AEW19}.  We prove our main result in Section \ref{sec:wmds}, equating an orthogonal period of the Eisenstein series with a Fourier-Whittaker function of a metaplectic Eisenstein series.  We make use of the computation of the metaplectic Whittaker coefficients due to Brubaker-Bump-Friedberg and Brubaker-Bump-Friedberg-Hoffstein \cite{bbcfh, bbf-xtal} together with an alternate description of Chinta-Gunnells \cite{cg-inv,cg-jams}.  Section \ref{sec:4cm} gives some arithmetic relationships between the four CM point of \cite{AEW19}, and Section \ref{sec:further} lists open questions and avenues for future study.

\vskip .5in
{\bf Acknowledgments}  The second named author gratefully acknowledges the support of Coordena\c{c}\~ao de Aperfei\c{c}oamento de Pessoal de N\'ivel Superior which funded his visit to The City College of New York from March 1--August 30, 2019.  Both authors were supported by NSF-DMS 1601289 and the first named author acknowledges support from a PSC-CUNY Award, jointly funded by The Professional Staff Congress and The City University of New York.

The authors are also grateful to Prof. Akshay Venkatesh who first brought the work of Aka, Einsiedler and Wieser \cite{AEW19} to the authors' attention.

\section{Genus classes and orthogonal periods}
\label{sec:genus}

In this section we relate the period integral of an automorphic form over an anisotropic orthogonal group to a sum over a genus class.

Let $S=\{\infty\}\cup\{ p \ | \ p \ \hbox{is a prime number}\}$ be the set of places of $\mathbb{Q}$ and $S_f=S-\{\infty\}$ the set of finite places.
For an algebraic variety $G$ defined over $\mathbb{Q}$ and a place $v$ of
$\mathbb{Q}$ we denote $G_v=G(\mathbb{Q}_v)$ and
$G_{\mathbb{A}}=G(\mathbb{A})$.
We consider $G=\GL_4$ as an algebraic group over $\mathbb{Q}$.
Let
\[K=O(4)\prod_p\GL_4(\mathbb{Z}_p)\mbox{ and }
K_f=\prod_p\GL_4(\mathbb{Z}_p),
\]
where $O(4)$ is the orthogonal group in $\GL_4(\R).$  Note that $K$ is a maximal compact subgroup of $G_{\mathbb{A}}$.

Let $$X=\{g\in G: {}^tg=g\}$$
be the space of symmetric matrices in $G$. There is an action of $G$
on $X$ given by $g\adot x=gx{}^tg$.  For $x\in X_{\mathbb{Q}}$,
define the class of $x$ to be
$$[x]=\GL_4(\mathbb{Z})\adot x$$
and denote $x\sim y$ if $y\in [x]$. The genus class of $x$ is defined
as

$$[[x]]=X_{\mathbb{Q}}\cap[(G_{\infty}K_f)\adot x],$$
and we denote by $[[x]]/ \sim$ the set of classes in the genus class of $x$. Let $X_{\infty}^{+}$ be the set of positive definite matrices in $X_{\infty}$. It is well known that if $x\in X_{\mathbb{Q}}\cap X_{\infty}^{+}$ then $[[x]]/\sim$ is a finite set (cf. \cite[Prop. 2.3 and Thm. 5.1]{Bor63}).

By \cite[Prop. 2.2]{Bor63}, we have $G_{\mathbb{A}}=G_{\mathbb{Q}}G_{\infty}K_f$, from which follows that the embedding of $G_{\infty}$ in $G_{\mathbb{A}}$ defines a bijection
\[G_{\mathbb{Q}}\backslash G_{\mathbb{A}}/K\simeq G_{\mathbb{Z}}\backslash G_{\infty}/K_{\infty}=\GL_4(\mathbb{Z})\backslash \GL_4(\mathbb{R})/O(4).\]
The symmetric space $\GL_4(\mathbb{R})/O(4)$ is identified with $X_{\infty}^{+}$ via $g\mapsto g\adot \id$. Thus a function $\phi$ on $G_{\mathbb{Q}}\backslash G_{\mathbb{A}}/K$ can be regarded as a function $\phi^{+}$ on $G_{\mathbb{Z}}\backslash X_{\infty}^{+}$ by setting $\phi^{+}(g\adot e)=\phi(g)$ for $g\in G_{\infty}$.  In the sequel we will drop the superscript and use $\phi$ to denote both the function on $G_{\mathbb{Q}}\backslash G_{\mathbb{A}}/K$ and the one on $G_{\mathbb{Z}}\backslash X_{\infty}^{+}.$

Now let $x\in X_\Q\cap X_\infty^+$  and
$$H^x=\{g\in G: g\adot x=x\}$$
be the orthogonal group associated with $x$.  Since $H$ is anisotropic, the quotient $H_{\mathbb{Q}}^x\backslash H_\mathbb{A}^x$ is compact and the period integral
\[P^H(\phi)=\int_{H_{\mathbb{Q}}^x\backslash H_\mathbb{A}^x}\phi(h\theta)dh
\] is well defined for any continuous function  $\phi$ on $H_{\mathbb{Q}}^x\backslash H_\mathbb{A}^x.$

\begin{lem}
Let $\phi$ be a complex valued function on $G_{\mathbb{Q}}\backslash G_{\mathbb{A}}/K$ and $x\in X_{\mathbb{Q}}\cap X_{\infty}^{+}.$  Choose $\theta\in G_\infty$ satisfying $\theta\adot\id=x.$ we have

$$\int_{H_{\mathbb{Q}}^x\backslash H_\mathbb{A}^x}\phi(h\theta)\,dh=\vol((H_{\mathbb{A}_f}^x\cap K_f)H_{\infty}^x)\sum_{[y]\in[[x]]/\sim}\epsilon(y)^{-1}\phi(y)$$
where
$\epsilon(y)=\#\{g\in G(\mathbb{Z}) \ | \ g\adot y=y\}.$
\end{lem}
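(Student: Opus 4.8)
The plan is to unfold the adelic period into a finite sum indexed by the class set of the anisotropic group $H^x$, and then to identify this class set with the genus $[[x]]/\sim$. First I would observe that the integrand $h\mapsto\phi(h\theta)$ is left $H^x_{\mathbb{Q}}$-invariant (by left $G_{\mathbb{Q}}$-invariance of $\phi$, since $H^x_{\mathbb{Q}}\subset G_{\mathbb{Q}}$) and right invariant under $V:=H^x_\infty\, U$, where $U:=H^x_{\mathbb{A}_f}\cap K_f$. Indeed $U$ commutes with $\theta\in G_\infty$ and lies in $K_f$, while $H^x_\infty=\theta\,O(4)\,\theta^{-1}$ gives $H^x_\infty\theta=\theta\,O(4)\subset\theta K$, so right $V$-invariance follows from right $K$-invariance of $\phi$. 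Because $x$ is positive definite, $H^x_\infty$ is compact, hence $V$ is compact; and $H^x$ anisotropic makes the double coset space $H^x_{\mathbb{Q}}\backslash H^x_{\mathbb{A}}/V$ finite. Choosing representatives $\xi_1,\dots,\xi_m\in H^x_{\mathbb{A}_f}$ and writing $\Gamma_j=\xi_j^{-1}H^x_{\mathbb{Q}}\xi_j\cap V$ (a discrete subgroup of the compact $V$, hence finite), the standard unfolding gives
\[
\int_{H^x_{\mathbb{Q}}\backslash H^x_{\mathbb{A}}}\phi(h\theta)\,dh=\sum_{j=1}^m\phi(\xi_j\theta)\,\frac{\vol(V)}{|\Gamma_j|},\qquad \vol(V)=\vol((H^x_{\mathbb{A}_f}\cap K_f)H^x_\infty),
\]
which already produces the volume factor of the statement.

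Next I would introduce the class map. Using $G_{\mathbb{A}}=G_{\mathbb{Q}}G_\infty K_f$, write $\xi_j\theta=\gamma_j\,g_{j,\infty}\,k_{j,f}$ and set $y_j:=\gamma_j^{-1}\adot x$. Since $\gamma_j$ is determined up to right multiplication by $G_{\mathbb{Q}}\cap G_\infty K_f=\GL_4(\Z)$, the class $[y_j]$ is well defined, and one checks $y_j\in X_{\mathbb{Q}}\cap(G_\infty K_f)\adot x=[[x]]$. By left $G_{\mathbb{Q}}$- and right $K_f$-invariance, $\phi(\xi_j\theta)=\phi(g_{j,\infty})=\phi(y_j)$ under the identification of $\phi$ with the function on $G_{\mathbb{Z}}\backslash X_\infty^+$. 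Thus the sum becomes $\vol(V)\sum_j |\Gamma_j|^{-1}\phi(y_j)$, and it remains to show that $j\mapsto[y_j]$ is a bijection onto $[[x]]/\sim$ and that $|\Gamma_j|=\epsilon(y_j)$.

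The stabilizer identity is a local computation: since at each finite $p$ one has $\gamma_{j,p}=\xi_{j,p}k_{j,p}^{-1}$, conjugation $\eta\mapsto\gamma_j^{-1}\eta\gamma_j$ carries $\Gamma_j=\{\eta\in H^x_{\mathbb{Q}}:\xi_{j,p}^{-1}\eta\xi_{j,p}\in\GL_4(\Z_p)\ \forall p\}$ isomorphically onto $\{g\in\GL_4(\Z):g\adot y_j=y_j\}$, because $\gamma_{j,p}^{-1}\eta\gamma_{j,p}=k_{j,p}(\xi_{j,p}^{-1}\eta\xi_{j,p})k_{j,p}^{-1}$ is integral exactly when $\xi_{j,p}^{-1}\eta\xi_{j,p}$ is; this gives $|\Gamma_j|=\epsilon(y_j)$. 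For the bijection, injectivity is again bookkeeping: if $y_i=\delta\adot y_j$ with $\delta\in\GL_4(\Z)$, then $w:=\gamma_i\delta\gamma_j^{-1}\in H^x_{\mathbb{Q}}$ satisfies $\xi_{i,p}^{-1}w\xi_{j,p}\in\GL_4(\Z_p)$ for all $p$, placing $\xi_i$ and $\xi_j$ in the same double coset.

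The main obstacle is surjectivity, and this is where I expect the essential arithmetic input to enter. Given $y\in[[x]]$, I would first note that $y$ and $x$ are equivalent over $\mathbb{Q}_v$ at every place $v$ (they differ by an element of $G_\infty K_f$), so by Hasse--Minkowski they are $\GL_4(\Q)$-equivalent: there exists $\gamma\in\GL_4(\Q)$ with $\gamma^{-1}\adot x=y$. Writing $y=\beta\adot x$ with $\beta\in G_\infty K_f$, the element $h_0:=\gamma\beta$ then satisfies $h_0\adot x=x$, i.e. $h_0\in H^x_{\mathbb{A}}$, and unwinding the decomposition of $h_0\theta$ shows its class map value is $[y]$. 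Hence every genus class is attained. Combining the bijection, the stabilizer identity, and the volume computation yields the claimed formula. I expect the delicate points to be the clean passage between the three $\adot$-actions at the various places (the conjugation of stabilizers through $\gamma_j$ and $\xi_j$, and the commuting of $\theta$ with the finite-adelic factors) and the invocation of Hasse--Minkowski to force global equivalence from local equivalence.
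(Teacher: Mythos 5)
Your proof is correct, and it supplies in full the argument that the paper itself omits: the paper's ``proof'' of this lemma is just the remark that it can be adapted from \cite[Lem.~2.1]{CO1}, and the route you take --- unfolding the period over the open compact subgroup $V=(H^x_{\mathbb{A}_f}\cap K_f)H^x_\infty$, defining the class map via the decomposition $G_{\mathbb{A}}=G_{\mathbb{Q}}G_\infty K_f$, matching stabilizers by conjugation through $\gamma_j$, and using the Hasse principle for equivalence of quadratic forms to get surjectivity onto $[[x]]/\sim$ --- is exactly the standard argument underlying that reference. All the key verifications (right $V$-invariance of $h\mapsto\phi(h\theta)$ using $H^x_\infty=\theta\,O(4)\,\theta^{-1}$ and the commutation of $K_f$ with $\theta$, finiteness of the double coset space from anisotropy, and $|\Gamma_j|=\epsilon(y_j)$) check out.
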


\begin{proof}
A proof can be readily adapted from \cite[Lem. 2.1]{CO1}.
\end{proof}
\bigskip


\section{The $(2,2)$ Eisenstein series}
\label{sec:eisenstein}

In this section we introduce the Eisenstein series on $\GL_4$ induced from characters on the standard parabolic subgroup of type $(2,2).$  Let $P$ be the $(2,2)$ parabolic subgroup of $\GL_4$ containing the Borel subgroup of upper triangular matrices.  Write $P=MU$, where $U$ is the unipotent radical and $M$ is the standard Levi subgroup of $P$. For $\mu=(\mu_1,\mu_2)\in \mathbb{C}^2$, such that $\mu_1+\mu_2=0$, we associate the character of $M_{\mathbb{A}}=P_{\mathbb{A}}/U_{\mathbb{A}}$
\begin{equation*}
\diag(m_1,m_2)\mapsto |\det m_1|_{\mathbb{A}}^{\mu_1}|\det m_2|_{\mathbb{A}}^{\mu_2},
\end{equation*}
which we denote also by $\mu$. We denote by $I_P^G(\mu)=Ind_{P_\mathbb{A}}^{G_{\mathbb{A}}}(\mu\cdot \delta_{P_{\mathbb{A}}}^{1/2})$ the normalized induction of $\mu$ from $M_{\mathbb{A}}$ to $G_{\mathbb{A}}$. For $\varphi\in I_P^G(\mu)$, define the Eisenstein series
\begin{equation*}
E_P(g,\varphi,\mu)=\sum_{\gamma\in P_{\mathbb{Q}}\backslash G_{\mathbb{Q}}}\varphi(\gamma g),
\end{equation*}
The sum is absolutely convergent for $\re(\mu_1-\mu_2)$ sufficiently large, then meromorphically continued to $\mu_1\in \C.$

For  $m=\diag(m_1,m_2)\in M_{\mathbb{A}}$, $v\in V_{\mathbb{A}}$ and $k\in K$, let
\begin{equation*}
\varphi_{\mu}(vmk)=|\det m_1|_{\mathbb{A}}^{\mu_1+1}|\det m_2|_{\mathbb{A}}^{\mu_2-1}
\end{equation*}
be the $K$-invariant element of $I_P^G(\mu)$, normalized so $\varphi_{\mu}(\id)=1$. We define
\begin{equation*}
E_P(g;\mu)=E_P(g,\varphi_{\mu},\mu)
\end{equation*}
We also let $E_P$ denote the associated function on $G_{\mathbb{Z}}\backslash X_{\infty}^{+}$: for positive definite symmetric $x=g\adot \id$, $$E_P(x;\mu)=E_P(g;\mu).$$

Next we find an expression of $E_P(x;\mu)$ in terms of $x\in X_{\infty}^+$.
For  $x\in X_{\infty}^+$, we denote by $d_2(x)$ the determinant of the lower right $2\times 2$ block of $x$. Observe that $d_2(x)>0$ because $x$ is positive definite. If $g\in G_{\infty}$, by the Iwasawa decomposition we can write $g=vmk$ with $v\in V_{\infty}$, $m\in M_{\infty}$ and $k\in K_{\infty}$. From this it follows that

$$\det(g\adot e)=|\det m_1|^2|\det m_2|^2 \qquad\text{and}\qquad d_2(g.e)=|\det m_2|^2,$$
which implies

$$\det (g\adot e)^{(\mu_1+1)/2}\cdot d_2(g.e)^{-(\mu_1-\mu_2+2)/2}=|\det m_1|^{\mu_1+1}|\det m_2|^{\mu_2-1}=\varphi_{\mu}(g).$$
Using this and the natural bijection $P_{\mathbb{Z}}\backslash G_{\mathbb{Z}}\simeq P_{\mathbb{Q}}\backslash G_{\mathbb{Q}}$, we express $E_P(x,\mu)$ as a function in $G_{\mathbb{Z}}\backslash X_{\infty}^{+}$ in the following way,

\begin{equation}\label{EisenPosMat}
E_P(x,\mu)=\det x^{(\mu_1+1)/2}\sum_{\delta\in P_{\mathbb{Z}}\backslash G_{\mathbb{Z}}}d_2(\delta\adot x)^{-(\mu_1-\mu_2+2)/2}.
\end{equation}
Once again, this expression is valid for $\re(\mu_1-\mu_2)$ sufficiently large.


\section{Eisenstein series and representation numbers}
\label{sec:rep}


For $x\in X_{\mathbb{Q}}$, let $Q_x$ denote the quadratic form
associated with the matrix $x$, i.e. $Q_x(\xi)={}^t\xi x\xi$ for
$\xi\in\mathbb{R}^4$. Let $x\in X_{\mathbb{Q}}\cap X_{\infty}^{+}$
be integral, i.e. $Q_x(\xi)\in \mathbb{Z}$ for all
$\xi\in \mathbb{Z}^4$. We show that for such $x$, the Eisenstein
series $E_P(x;\mu)$ is a Dirichlet series in $\mu_1-\mu_2$. We
interpret the coefficients in terms of a type of representation
number, which counts points on the (partial) flag variety
$P_{\mathbb{Q}}\backslash G_{\mathbb{Q}}$. To define the
representation numbers we will use the Pl\"{u}cker coordinates of the
flag variety. To any $g\in G_{\mathbb{Q}}$, we associate
$v_2(g)\in \mathbb{Q}^6$, the vector of all $2\times 2$ minors in the
botton rows of $g$. For a vector $v\in\mathbb{Q}^6$, we denote by $[v]$
the associated point in the projective space
$\mathbb{P}_{\mathbb{Q}}^5$. The map
$$P_{\mathbb{Q}}g\mapsto [v_2(g)]$$
is an embedding
$$P_{\mathbb{Q}}\backslash G_{\mathbb{Q}}\hookrightarrow \mathbb{P}_{\mathbb{Q}}^5,$$
and if $(a:b:c:d:e:f)$ are the projective coordinates in $\mathbb{P}_{\mathbb{Q}}^5$, the image is the set of $(a:b:c:d:e:f)\in\mathbb{P}_{\mathbb{Q}}^5$ such that
\begin{equation}
  \label{eq:7}
 af-be+cd=0,
\end{equation}
see, e.g.,  \cite[Example 1.24]{ShafaBas}.
It will be more convenient for us to use the identification
$P_{\mathbb{Z}}\backslash G_{\mathbb{Z}}\simeq
P_{\mathbb{Q}}\backslash G_{\mathbb{Q}}$ and work with integral
coordinates. Thus the map
\begin{equation}\label{PlucOvZ1}
  \begin{split}
  P_{\mathbb{Z}}\backslash G_{\mathbb{Z}}&
\hookrightarrow \mathbb{Z}^6/\{\pm 1\}\\
  g&\mapsto [v_2(g)]
  \end{split}
\end{equation}
identifies the quotient $P_{\mathbb{Z}}\backslash G_{\mathbb{Z}}$ with 6-tuples of relatively prime integers satisfying \eqref{eq:7}.

In order to define the representation numbers of interest to us, begin by identifyting $\bigwedge^2\mathbb{Z}^4\simeq \mathbb{Z}^6$ by taking the
basis $\{e_i\wedge e_j\}_{1\leq i<j\leq 4}$ in lexicographic order. Let $Q_{\wedge^2 x}$ be the
quadratic form corresponding to the operator $\wedge^2 x$ in
$\bigwedge^2\mathbb{Z}^4$.
Explicitly the matrix of $Q_{\wedge^2 x}$ on the canonical basis of
$\mathbb{Z}^6$ is given by the matrix of principle $2\times 2$ minors
of $x$. Define the representation numbers
\begin{equation*}
r_P(x;k)=\# \{v\in P_{\mathbb{Z}}\backslash G_{\mathbb{Z}} : \ Q_{\wedge^2 x}(v)=k\}
\end{equation*}
for $k$  a positive integer.  Similarly define the genus representation numbers
\begin{equation*}
r_P(\gen(x);k)=\sum_{y\in[[x]]/\sim}\epsilon(y)^{-1}r_P(y;k).
\end{equation*}
The two associated Dirichlet series are
\begin{align*}
  Z_P(x;s)&=\sum_{k\geq 1}\dfrac{r_P(x;k)}{k^s},\\
Z_P(\gen(x);s)&=\sum_{k\geq 1}\dfrac{r_P(\gen(x);k)}{k^s}.
\end{align*}
By \cite[Lem 3.2]{CO1} we have the identity
\begin{equation*}
d_2(\delta\adot x)=Q_{\wedge^2x}(v_2(\delta)).
\end{equation*}
This together with Equation \eqref{EisenPosMat} implies the following.

\begin{prop}
  Consider $x\in X_{\mathbb{Q}}\cap X_{\infty}^{+}$ and $Q_x$
  integral. Then
  $$E_P(x;\mu)=\det x^{(\mu_1+1)/2}Z_P(x;(\mu_1-\mu_2+2)/2).$$
  If $\theta\in G_{\infty}$ is such that $\theta\adot e=x$, then
  \begin{align*}
    \int_{H_{\mathbb{Q}}^x\backslash H_\mathbb{A}^x}& E_P(h\theta;\mu)dh\\
&=\vol((H_{\mathbb{A}_f}^x\cap K_f)H_{\infty}^x)\det x^{(\mu_1+1)/2}Z_P(\gen(x);(\mu_1-\mu_2+2)/2).
  \end{align*}
\end{prop}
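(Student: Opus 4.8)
The plan is to deduce both identities from the two ingredients already assembled: the expression \eqref{EisenPosMat} for $E_P(x;\mu)$ as a sum over $P_{\mathbb{Z}}\backslash G_{\mathbb{Z}}$, and the minor identity $d_2(\delta\adot x)=Q_{\wedge^2x}(v_2(\delta))$ quoted from \cite[Lem.~3.2]{CO1}. Everything will be carried out in the range $\re(\mu_1-\mu_2)\gg 0$ where \eqref{EisenPosMat} converges absolutely, so that all regroupings and interchanges are legitimate.

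For the first identity I would substitute the minor identity directly into \eqref{EisenPosMat} to obtain
$$E_P(x;\mu)=\det x^{(\mu_1+1)/2}\sum_{\delta\in P_{\mathbb{Z}}\backslash G_{\mathbb{Z}}}Q_{\wedge^2 x}(v_2(\delta))^{-(\mu_1-\mu_2+2)/2}.$$
Under the Pl\"{u}cker identification \eqref{PlucOvZ1}, the quantity $Q_{\wedge^2 x}(v_2(\delta))$ depends only on the coset $\delta\in P_{\mathbb{Z}}\backslash G_{\mathbb{Z}}$ (it is insensitive to the sign ambiguity since $Q_{\wedge^2 x}$ is quadratic) and is a positive integer because $\wedge^2 x$ is positive definite and $v_2(\delta)$ is a nonzero primitive vector. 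I would then group the sum according to the common value $k=Q_{\wedge^2 x}(v_2(\delta))$; the number of cosets with a given $k$ is by definition $r_P(x;k)$, so collecting terms turns the sum into $\sum_{k\ge 1} r_P(x;k)k^{-s}=Z_P(x;s)$ with $s=(\mu_1-\mu_2+2)/2$, which is the first assertion.

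For the period identity I would apply the period formula (the Lemma of Section~\ref{sec:genus}) to $\phi=E_P(\,\cdot\,;\mu)$, which is a well-defined, $K$-invariant, continuous function on $G_{\mathbb{Q}}\backslash G_{\mathbb{A}}/K$ in the region of convergence since $\varphi_\mu$ is the normalized $K$-fixed section. This yields
$$\int_{H_{\mathbb{Q}}^x\backslash H_\mathbb{A}^x} E_P(h\theta;\mu)\,dh=\vol((H_{\mathbb{A}_f}^x\cap K_f)H_{\infty}^x)\sum_{[y]\in[[x]]/\sim}\epsilon(y)^{-1}E_P(y;\mu),$$
after which I would apply the first identity to each representative $y$. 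The hard part will be verifying that the first identity is actually available for every $y$ in the genus class, i.e. that $Q_y$ is integral and $\det y=\det x$. Both follow from local equivalence: since $y\in[(G_\infty K_f)\adot x]\cap X_{\mathbb{Q}}$, at each finite prime $p$ one has $y=k_p\adot x$ with $k_p\in\GL_4(\mathbb{Z}_p)$, so integrality of $Q_x$ is inherited by $Q_y$, and $v_p(\det y)=v_p(\det x)$ for all $p$; as $\det y$ and $\det x$ are positive rationals this forces $\det y=\det x$.

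Granting this, the common factor $\det x^{(\mu_1+1)/2}$ pulls out of the sum, and the remaining weighted sum $\sum_{[y]}\epsilon(y)^{-1}Z_P(y;s)$ is, directly from the definitions of $r_P(\gen(x);k)$ and $Z_P(\gen(x);s)$, equal to $Z_P(\gen(x);s)$ with $s=(\mu_1-\mu_2+2)/2$; this gives the second identity. Thus the only genuine obstacle is the constancy of the determinant and integrality across the genus class, together with the legitimacy of interchanging the period integral with the defining sums of the Eisenstein series and of the Dirichlet series. Both are handled by remaining in the domain of absolute convergence and by recalling, as in Section~\ref{sec:genus}, that the genus class $[[x]]/\sim$ is finite, so the outer sum is a finite one.
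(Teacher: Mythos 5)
Your proposal is correct and follows exactly the route the paper intends: the paper derives the first identity by combining \eqref{EisenPosMat} with the minor identity from \cite[Lem.~3.2]{CO1} and grouping cosets by the value of $Q_{\wedge^2 x}(v_2(\delta))$, and obtains the period formula by applying the Lemma of Section~\ref{sec:genus} termwise over the finite genus class. Your additional verification that integrality and the determinant are constant across $[[x]]/\sim$ is a detail the paper leaves implicit, but it is handled correctly.
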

Therefore the problem of comparing the orthogonal period of $E_P(g;\mu)$ with Weyl group multiple Dirichlet series is reduced to the study of the representation numbers $r_P(\gen(x);k)$, which we do in the next section for $x$ equal to the 4-by-4 identity matrix $\id.$
\bigskip


\section{Explicit formula for the representation numbers}
\label{sec:formula}

Specialize now to the case of the quaternary quadratic form $Q_4=Q_{I_4}$ defined by $I_4$, that is,
\begin{equation*}
Q_{4}(x,y,z,w)=x^2+y^2+z^2+w^2.
\end{equation*}
In this case the genus $[[\id]]$ contains only one class,
cf. \cite[Chap. 9 Sec. 4 Cor. 2]{Cas78}. Following ideas from
\cite{CO2, AEW19} we compute the numbers $\rp(d):=r_P(\id;d)$ in terms
of class numbers of imaginary quadratic fields.

\subsection{Representation numbers of planes in $\Z^4$}
Using the identification $\bigwedge^2\mathbb{Z}^4\simeq \mathbb{Z}^6$ of the previous section, we
obtain
\begin{equation*}
Q_{\wedge^2\id}(v)=a^2+b^2+c^2+d^2+e^2+f^2
\end{equation*}
for $v=(a,b,c,d,e,f)\in\mathbb{Z}^6$ or its image $[v]$ in $\Z^6/\{\pm 1\}.$  Therefore $\rp(n)$ is the number of 6-tuples
$(a:b:c:d:e:f)\in\mathbb{Z}^6/\{\pm 1\}$ satisfying
\begin{itemize}
\item $af-be+cd=0$,
\item $a^2+b^2+c^2+d^2+e^2+f^2=n$, and
\item $\gcd(a,b,c,d,e,f)=1$.
\end{itemize}

Recall that a lattice $L\subset\mathbb{Z}^4$ is primitive if $(\mathbb{Q}\cdot L)\cap\mathbb{Z}^4=L$, i.e. if $L$ is generated by some vectors of a basis of $\mathbb{Z}^4$. Let $\gr$ denote the set of 2-dimensional
primitive lattices of $\mathbb{Z}^4$. We have a natural embedding
\begin{equation}
  \label{eq:8}
\begin{array}{cccc}
    \Psi: & \gr & \longrightarrow & \bigwedge\nolimits^2\mathbb{Z}^4/\{\pm 1\}, \\
     & \langle u,v\rangle & \longmapsto &  u\wedge v
  \end{array}
\end{equation}
which corresponds to the embedding  $P_{\mathbb{Z}}\backslash G_{\mathbb{Z}}$ of \eqref{PlucOvZ1}.

For $L\in \gr$, we
denote by $Q_L$ the restriction of the quadratic form
$$Q_4(x,y,z,w)=x^2+y^2+z^2+w^2$$
to $L$ and let $\disc(Q_L)$ be $-4$ times the determinant of the
matrix of $Q_L$ with respect to some basis of $L$.

\begin{lem}\label{EquaDisc}
If $L=\langle u,v\rangle\in \gr$ then
$$\disc(Q_L)=-4\cdot Q_{\wedge \id}(u\wedge v).$$
\end{lem}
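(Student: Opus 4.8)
The plan is to compute both sides explicitly in terms of a chosen basis $u,v$ of $L$ and verify they agree. Write $u=(u_1,u_2,u_3,u_4)$ and $v=(v_1,v_2,v_3,v_4)$. The discriminant $\disc(Q_L)$ is by definition $-4$ times the determinant of the Gram matrix of $Q_4|_L$ in the basis $\{u,v\}$. Since $Q_4$ is the standard inner product, this Gram matrix is
\[
\begin{pmatrix} \langle u,u\rangle & \langle u,v\rangle \\ \langle u,v\rangle & \langle v,v\rangle \end{pmatrix}
=
\begin{pmatrix} \sum_i u_i^2 & \sum_i u_iv_i \\ \sum_i u_iv_i & \sum_i v_i^2 \end{pmatrix},
\]
so its determinant is $\bigl(\sum_i u_i^2\bigr)\bigl(\sum_i v_i^2\bigr)-\bigl(\sum_i u_iv_i\bigr)^2$.

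Next I would identify the right-hand side. The wedge $u\wedge v$ has Pl\"ucker coordinates $p_{ij}=u_iv_j-u_jv_i$ for $1\le i<j\le 4$, and under the identification $\bigwedge^2\Z^4\simeq\Z^6$ with the lexicographic basis $\{e_i\wedge e_j\}$ together with the fact (established in Section \ref{sec:formula}) that $Q_{\wedge^2\id}$ is the standard sum of six squares, we get
\[
Q_{\wedge\id}(u\wedge v)=\sum_{1\le i<j\le 4} p_{ij}^2=\sum_{i<j}(u_iv_j-u_jv_i)^2.
\]
The key computational step is then the classical Lagrange–Binet–Cauchy identity
\[
\sum_{i<j}(u_iv_j-u_jv_i)^2=\Bigl(\sum_i u_i^2\Bigr)\Bigl(\sum_i v_i^2\Bigr)-\Bigl(\sum_i u_iv_i\Bigr)^2,
\]
which shows that $Q_{\wedge\id}(u\wedge v)$ equals exactly the determinant of the Gram matrix computed above. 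Multiplying by $-4$ gives $\disc(Q_L)=-4\cdot Q_{\wedge\id}(u\wedge v)$, as claimed.

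There is no real obstacle here; the statement is essentially the Gram determinant formula for the area of the parallelogram spanned by $u$ and $v$, repackaged through the Pl\"ucker embedding. The only point requiring a word of care is checking that the identity is independent of the choice of basis of $L$: a different basis $\{u',v'\}$ is related to $\{u,v\}$ by a matrix in $\GL_2(\Z)$, which scales both the Gram determinant and $u\wedge v$ by the square of its determinant, namely by $1$. Since $Q_{\wedge\id}$ is a quadratic form, $u\wedge v$ and $u'\wedge v'$ give the same value, so both sides are genuinely invariants of $L$ and the identity is well posed. I would state the Binet–Cauchy identity as the single nontrivial input and let the rest follow by direct substitution.
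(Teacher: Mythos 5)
Your proposal is correct and follows essentially the same route as the paper: both compute the Gram matrix of $Q_L$ in the basis $\{u,v\}$ and the Pl\"ucker coordinates of $u\wedge v$, and the ``follows easily'' step in the paper is precisely the Lagrange--Binet--Cauchy identity you name explicitly. Your added remark on independence of the choice of basis is a sensible (if standard) extra check, but the substance of the argument is identical.
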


\begin{proof}
If $u=(x_1,y_1,z_1,w_1)$ and $v=(x_2,y_2,y_2,w_2)$, we have
\begin{align*}
u\wedge v= & (x_1y_2-x_2y_1)e_1\wedge e_2+(x_1z_2-x_2z_1)e_1\wedge e_3+(x_1w_2-x_2w_1)e_1\wedge e_4 \\
 & +(y_1z_2-y_2z_1)e_2\wedge e_3+(y_1w_2-y_2w_1)e_2\wedge e_4+(z_1w_2-z_2w_1)e_3\wedge e_4
\end{align*}
while the Gram matrix of the binary quadratic form $Q_L$ with respect to the basis $u,v$ is
\begin{equation*}
  \begin{pmatrix}
    x_1^2+y_1^2+z_1^2+w_1^2& x_1x_2+y_1y_2+z_1z_2+w_1w_2\\  x_1x_2+y_1y_2+z_1z_2+w_1w_2 & x_2^2+y_2^2+z_2^2+w_2^2.
  \end{pmatrix}
\end{equation*}
From these two expressions the identity follows easily.
\end{proof}

Similar direct computations lead to the next result.
\begin{prop}\label{WedgePerp}
If $L\in Gr_{2,4}(\mathbb{Z})$ and $\Psi(L)=(a:b:c:d:e:f)$, then
$$\Psi(L^{\bot})=(f:-e:d:c:-b:a).$$
\end{prop}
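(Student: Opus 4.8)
The plan is to exhibit the explicit integer bivector
\[
\omega^{*}=f\,e_1\wedge e_2-e\,e_1\wedge e_3+d\,e_1\wedge e_4+c\,e_2\wedge e_3-b\,e_2\wedge e_4+a\,e_3\wedge e_4,
\]
whose coordinate vector in the lexicographic basis is $(f,-e,d,c,-b,a)$, and to prove that it equals $\Psi(L^{\bot})$ up to sign. Writing $W=\R\cdot L$, it suffices to show that $\omega^{*}$ is a primitive, decomposable, integer $2$-vector representing the orthogonal plane $W^{\bot}$, since $L^{\bot}=\Z^{4}\cap W^{\bot}$ is then precisely the primitive lattice whose Pl\"ucker image is $\pm\omega^{*}$.

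First I would check that $\omega^{*}$ is decomposable. The Pl\"ucker quadric \eqref{eq:7} evaluated on $(f,-e,d,c,-b,a)$ gives $fa-(-e)(-b)+dc=af-be+cd$, which vanishes precisely because $\Psi(L)=(a:b:c:d:e:f)$ already satisfies \eqref{eq:7}. Thus $\omega^{*}=p\wedge q$ for some plane $U=\langle p,q\rangle$. Next I would verify that $U=W^{\bot}$. For $u=(x_1,y_1,z_1,w_1)$, contracting $\omega^{*}$ against $u$ with the standard inner product (so that $\iota_u(e_i\wedge e_j)=u_i e_j-u_j e_i$) yields, after substituting $a=x_1y_2-x_2y_1,\ \dots,\ f=z_1w_2-z_2w_1$, the vanishing $\iota_u\omega^{*}=0$; each of the four coordinates cancels as a $3\times 3$-type determinant identity. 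Since $\iota_u(p\wedge q)=\langle u,p\rangle q-\langle u,q\rangle p$ and $p,q$ are independent, $\iota_u\omega^{*}=0$ forces $u\perp U$, and likewise $v\perp U$. Hence $W\subseteq U^{\bot}$, and comparing dimensions ($\dim U^{\bot}=2=\dim W$) gives $W=U^{\bot}$, so $U=W^{\bot}$ because $W$ is positive definite.

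It remains to match $\omega^{*}$ with $\Psi(L^{\bot})$. The coordinates $(f,-e,d,c,-b,a)$ are a signed rearrangement of $(a,b,c,d,e,f)$, so their gcd equals $\gcd(a,b,c,d,e,f)=1$; thus $\omega^{*}$ is primitive in $\bigwedge^{2}\Z^{4}$. A primitive decomposable integer $2$-vector representing a plane is, up to sign, the wedge of a basis of the saturated lattice $\Z^{4}\cap W^{\bot}=L^{\bot}$, which is exactly the content of the identification \eqref{PlucOvZ1}. Therefore $\omega^{*}=\pm\,u'\wedge v'$ for a basis $u',v'$ of $L^{\bot}$, i.e.\ $\Psi(L^{\bot})=(f:-e:d:c:-b:a)$ in $\Z^{6}/\{\pm 1\}$. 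The only real work is the contraction computation $\iota_u\omega^{*}=\iota_v\omega^{*}=0$; it is the analogue of the cancellation already seen in Lemma \ref{EquaDisc}, and the sign pattern $(f,-e,d,c,-b,a)$ is forced by exactly these vanishings, so keeping the four signs straight is the one place that demands care.
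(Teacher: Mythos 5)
Your argument is correct and is essentially the ``similar direct computation'' that the paper alludes to before omitting the proof: the bivector $\omega^{*}$ you exhibit is the Hodge star of $u\wedge v$, and your chain of verifications --- the Pl\"ucker relation for $(f,-e,d,c,-b,a)$, the four coordinate cancellations in $\iota_u\omega^{*}=\iota_v\omega^{*}=0$ (which do all check out against $a=x_1y_2-x_2y_1,\dots,f=z_1w_2-z_2w_1$), and the primitivity of the signed rearrangement --- pins down $\Psi(L^{\bot})$ exactly, up to the quotient by $\{\pm 1\}$. One cosmetic point: $U=W^{\bot}$ follows from $W=U^{\bot}$ by nondegeneracy of the standard inner product on $\R^4$, not by positive definiteness of $W$, but this does not affect the argument.
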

We omit the proof.

\subsection{The Klein map}  In order to express $\rp(n)$ in terms of squares of class numbers we need another
parametrization of $\gr$ as described in \cite{AEW19}.
We denote by $\mathbf{B}(\mathbb{Q})$ the $\mathbb{Q}$-algebra of Hamilton quaternions, by $\overline{x}$ the conjugate of any element $x\in\mathbf{B}(\mathbb{Q})$ and by $Tr(x)=x+\overline{x}$ the (reduced) trace. The (reduced) norm on $\mathbf{B}(\mathbb{Q})$ is given by
\[Nr(x)=x\overline{x}=\overline{x}x=x_0^2+x_1^2+x_2^2+x_3^2,\] where
$x=x_0+x_1\mathbf{i}+x_2\mathbf{j}+x_3\mathbf{k}\in\mathbf{B}(\mathbb{Q})$.
We denote by $\mathbf{B}(\mathbb{Z})$ the subring of
$\mathbf{B}(\mathbb{Q})$ of quaternions with integral
coefficients. Let $\mathbf{B}_0(\mathbb{Q})$ denote the subset of
trace zero quaternions and $\mathbf{B}_0(\mathbb{Z})$ the trace zero
quaternions with integral coefficients.  Identify $\mathbb{Q}^4$
with $\mathbf{B}(\mathbb{Q})$ via the map
$(a,b,c,d)\mapsto a+b\mathbf{i}+c\mathbf{j}+d\mathbf{k}$.
This gives a corresponding identification of $\mathbb{Q}^3$
with $\mathbf{B}_0(\mathbb{Q})$ and of $\mathbb{Z}^3$
with $\mathbf{B}_0(\mathbb{Z})$.

Following \cite{AEW19} define
\begin{equation*}
\mathbf{K}(\mathbb{Z})=
\{(a_1,a_2) \ | \ a_1,a_2\in\mathbf{B}_0(\mathbb{Z})\backslash \{0\}
\mbox{ and }Nr(a_1)=Nr(a_2)\}/\sim
\end{equation*}
where $(a_1,a_2)\sim(a_1^{\prime},a_2^{\prime})$ if there is
$\lambda\in\{\pm 1\}$ with
$(a_1,a_2)=(\lambda a_1^{\prime},\lambda a_2^{\prime})$. We denote by $[a_1,a_2]$ the equivalence class of $(a_1,a_2)$ in $\mathbf{K}(\mathbb{Z})$. If
$L\in \gr$ with $L=\langle u,v\rangle$, put
\begin{align}\label{eq:k}
 a_1(L)&:=u\overline{v}-\frac{1}{2}Tr(u\overline{v})\\ \nonumber
a_2(L)&:=\overline{v}u-\frac{1}{2}Tr(\overline{v}u).
\end{align}
The \textit{Klein map} is defined by
\begin{equation}\label{eq:klein_map}
\begin{array}{cccc}
  \Phi: & \gr & \longrightarrow & \mathbf{K}(\mathbb{Z}) . \\
   & L & \longmapsto & [a_1(L),a_2(L)].
\end{array}
\end{equation}
We say that a pair of vectors
$(w_1,w_2)\in\mathbb{Z}^3\times\mathbb{Z}^3$ is pair-primitive if
$\frac{1}{p}w_1\notin\mathbb{Z}^3$ or
$\frac{1}{p}w_2\notin\mathbb{Z}^3$ for all odd
primes $p$ and if $\frac{1}{4}(w_1+w_2)\notin\mathbb{Z}^3$ or
$\frac{1}{4}(w_1-w_2)\notin\mathbb{Z}^3$.
The following result is proven in \cite{AEW19}.

\begin{prop}[{\cite[Prop. 2.2 and Lem. 2.4]{AEW19}}]\label{prop:K}
  The Klein map $\Phi$ is a well-defined bijection between $\gr$ and the
  set of $[a_1,a_2]\in\mathbf{K}(\mathbb{Z})$ such that $(a_1,a_2)$
  is pair-primitive and $a_1\equiv a_2\pmod 2$. Moreover, we have
  \begin{enumerate}
    \item $\disc(Q_L)=-4\cdot Nr(a_1(L))=-4\cdot Nr(a_2(L))$.
    \item $\Phi(L^{\bot})=[a_1(L),-a_2(L)]$.
  \end{enumerate}
\end{prop}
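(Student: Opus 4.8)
The plan is to pass through a single explicit linear change of variables relating the Pl\"{u}cker vector $v_2(g)=(a:b:c:d:e:f)$ of \eqref{PlucOvZ1} to the pair $(a_1(L),a_2(L))$ of \eqref{eq:k}, and to deduce every assertion from it. First I would unwind the quaternionic meaning of \eqref{eq:k}: writing $L=\langle u,v\rangle$ and using $\tfrac12 Tr(u\overline v)=\re(u\overline v)=\langle u,v\rangle$, the elements $a_1(L)$ and $a_2(L)$ are exactly the trace-zero (imaginary) parts of $u\overline v$ and $\overline v u$, so they lie in $\mathbf{B}_0(\Z)$ as soon as $u,v\in\Z^4$. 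Expanding $u\overline v$ and $\overline v u$ in the basis $\{1,\mathbf i,\mathbf j,\mathbf k\}$ and comparing with the six $2\times 2$ minors of Lemma \ref{EquaDisc}, I would obtain explicit formulas writing each coordinate of $a_1(L)$ and $a_2(L)$ as a signed sum of two Pl\"{u}cker coordinates, together with the inverse relations $(a,b,c)=-\tfrac12(a_1+a_2)$ and $(d,e,f)=\pm\tfrac12(a_1-a_2)$ (in a suitable order and sign pattern). The resulting linear map is invertible over $\Z[\tfrac12]$, and it is the backbone of the argument.

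Granting this dictionary, several claims become formal. Since the norm of a quaternion is the sum of the square of its real part and the norm of its imaginary part, $Nr(a_1(L))=Nr(u)Nr(v)-\langle u,v\rangle^2$, which equals $-\tfrac14\disc(Q_L)$ by the Gram-matrix computation behind Lemma \ref{EquaDisc}; the same holds verbatim for $a_2(L)$, proving (1) and giving $Nr(a_1)=Nr(a_2)$, so the image lies in $\mathbf{K}(\Z)$. Because the map $\Phi$ now factors as the embedding $\Psi$ of \eqref{eq:8} followed by the (linear, hence sign-respecting) invertible map, both well-definedness and injectivity on $\gr$ are inherited from $\Psi$. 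For (2) I would substitute $\Psi(L^\bot)=(f:-e:d:c:-b:a)$ from Proposition \ref{WedgePerp} into the explicit formulas and read off $a_1(L^\bot)=a_1(L)$ and $a_2(L^\bot)=-a_2(L)$.

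It remains to identify the image, i.e. surjectivity onto the stated subset of $\mathbf{K}(\Z)$. Here I would check, prime by prime, that the three conditions cutting out $\Psi(\gr)\subset\Z^6/\{\pm1\}$ --- the Pl\"{u}cker relation \eqref{eq:7}, integrality, and $\gcd(a,\dots,f)=1$ --- match the three conditions defining the target. The Pl\"{u}cker relation translates cleanly: a short computation gives
\[ Nr(a_1)-Nr(a_2)=4(af-be+cd), \]
so equal norm is precisely the condition $af-be+cd=0$ of lying on the Klein quadric. Integrality of $(a,\dots,f)$ under the inverse map forces $a_1\pm a_2\equiv 0\pmod 2$, which is exactly the congruence $a_1\equiv a_2\pmod 2$; and nonvanishing of $a_1,a_2$ matches $\disc(Q_L)\neq 0$.

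The step I expect to be the real obstacle is matching primitivity at the prime $2$, where the factor $\tfrac12$ in the inverse map interferes. For odd $p$ the dictionary is transparent: $p\mid\gcd(a,\dots,f)$ if and only if $p$ divides every coordinate of both $a_1$ and $a_2$, which is the odd half of pair-primitivity. At $p=2$, since $(a,b,c)=-\tfrac12(a_1+a_2)$ and $(d,e,f)$ are $\pm\tfrac12$ the coordinates of $a_1-a_2$, all six Pl\"{u}cker coordinates are even exactly when both $\tfrac14(a_1+a_2)$ and $\tfrac14(a_1-a_2)$ lie in $\Z^3$; hence non-divisibility by $2$ is precisely the $2$-adic clause in the definition of pair-primitive. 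The care needed is to confirm there is no hidden interaction between the residue $a_1\equiv a_2\pmod 2$ and the $\tfrac14$-divisibility, so that pair-primitivity together with the congruence really is equivalent to $\gcd(a,\dots,f)=1$. Assembling these local checks gives surjectivity onto the stated set and completes the identification, with (1) and (2) as above.
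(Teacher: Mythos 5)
The paper does not prove this proposition at all: it is imported verbatim from Aka--Einsiedler--Wieser (\cite[Prop.\ 2.2 and Lem.\ 2.4]{AEW19}), so there is no in-paper argument to compare against. Your blind proof is, as far as I can check, correct and essentially reconstructs the classical Klein correspondence underlying the cited result. The explicit dictionary you propose does work out: writing $(a,b,c,d,e,f)=(p_{12},p_{13},p_{14},p_{23},p_{24},p_{34})$ for the Pl\"ucker coordinates, one finds $a_1(L)=-\bigl((a+f)\mathbf i+(b-e)\mathbf j+(c+d)\mathbf k\bigr)$ and $a_2(L)=-\bigl((a-f)\mathbf i+(b+e)\mathbf j+(c-d)\mathbf k\bigr)$, from which your identity $Nr(a_1)-Nr(a_2)=4(af-be+cd)$ is immediate, as are well-definedness up to sign, part (1) via $Nr(a_i)=Nr(u)Nr(v)-\langle u,v\rangle^2$, and part (2) by substituting Proposition \ref{WedgePerp}. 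Your treatment of the prime $2$ is also right, and the worry you flag resolves affirmatively: integrality of the six Pl\"ucker coordinates under the inverse map is exactly the congruence $a_1\equiv a_2\pmod 2$, while divisibility of all six by $2$ is exactly failure of the $\tfrac14$-clause, with no interaction between the two conditions. The only ingredient you take as given is that \eqref{PlucOvZ1} is a bijection onto \emph{primitive} integral solutions of \eqref{eq:7}; the paper asserts this identification, but if you want a fully self-contained proof you should justify the integral (not merely rational) surjectivity of the Pl\"ucker parametrization, e.g.\ by transitivity of $\GL_4(\Z)$ on primitive planes. In exchange for this one borrowed fact, your argument yields the proposition without appeal to \cite{AEW19} and makes transparent why the three defining conditions on $[a_1,a_2]$ (equal norms, congruence mod $2$, pair-primitivity) are precisely the images of the Pl\"ucker relation, integrality, and coprimality.
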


\subsection{Relation to sums of three squares}
We have $\rp(n)=\#\Rp(n)$, where $\Rp(n)$ is
the set of $L\in Gr_{2,4}(\mathbb{Z})$ such that $\disc(Q_L)=-4n.$
Therefore $\rp(n)$ is the number of pair-primitive $(a_1,a_2)$ with
$a_1\equiv a_2\pmod 2$ and $Nr(a_1)=Nr(a_2)=n$, modulo $\{1,-1\}$. We
can compute $\rp(n)$ using ideas from \cite{AEW19}, in particular,
their Proposition 2.6 and the arguments of Corollary 2.7 and Corollary
2.9. We define
$$\mathbb{D}:=\{D\in\mathbb{N} \ | \ D\not\equiv 0,7,12,15 \pmod {16}\}.$$

\begin{prop}\label{prop:r}
Let $d$ be a positive integer. We have $\rp(d)>0$ if and only if $d\in\mathbb{D}$. Let $d\in\mathbb{D}$ and write $d=d_04^ef^2$ with $d_0$ squarefree,
$f$ odd and $e\in\{0,1\}$. Then we have:
\begin{equation}\label{Formulrd}
\rp(d)=c_dr_3(d_0)^2f^2\sum_{c|f}\frac{2^{\omega(c)}}{c}\prod_{p|f}
\left(1-p^{-1}\left(\frac{-d_0}{p}\right)\right)^{e_p(f/c)},
\end{equation}
where
\begin{center}
$c_d=$
$\left\{
   \begin{array}{ll}
     1/2 & \hbox{if $d\equiv 3\pmod 4$}, \\
     1/6 & \hbox{if $d\equiv 1,2\pmod 4$}, \\
     1/3 & \hbox{if $d\equiv 0\pmod 4$},
   \end{array}
 \right.$
\end{center}

\begin{center}
$e_p(n)=$
$\left\{
   \begin{array}{ll}
     2 & \hbox{if $p| n$}, \\
     1 & \hbox{if $p\nmid n$},
   \end{array}
 \right.$
\end{center}
$\left(\tfrac{-d_0}{p}\right)$ is the Legendre symbol and $\omega(c)=\sum_{p|c}1$.
\end{prop}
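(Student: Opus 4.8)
The plan is to start from the quaternionic reparametrization of $\gr$ furnished by the Klein map and convert the count of planes into a count of pairs of lattice points on a sphere, and then to resolve that count by a content decomposition together with a place-by-place analysis. By Proposition \ref{prop:K}, $\Phi$ is a bijection between $\gr$ and the pair-primitive classes $[a_1,a_2]\in\mathbf{K}(\Z)$ with $a_1\equiv a_2\pmod 2$, under which $\disc(Q_L)=-4\,Nr(a_1)=-4\,Nr(a_2)$. Hence $\rp(d)$ equals the number of such classes with $Nr(a_1)=Nr(a_2)=d$. Since the equivalence $\sim$ identifies $(a_1,a_2)$ with $(-a_1,-a_2)$ and acts freely on nonzero pairs, I would first record
$$\rp(d)=\tfrac12\,N(d),\qquad N(d)=\#\{(a_1,a_2):a_i\in\mathbf{B}_0(\Z)\setminus\{0\},\ Nr(a_i)=d,\ \text{pair-primitive},\ a_1\equiv a_2\!\!\pmod 2\}.$$
Under the identification $\mathbf{B}_0(\Z)\cong\Z^3$ with $Nr$ the sum of three squares, $N(d)$ counts ordered pairs of points of $\Rv(d)$ subject to pair-primitivity and the parity congruence; the factor $\tfrac12$ is one of the ingredients of $c_d$.

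Next I would separate each $a_i$ into its content and primitive part, $a_i=c_ib_i$ with $b_i\in\Z^3$ primitive and $c_i$ the gcd of its coordinates, so $Nr(a_i)=c_i^2\,Nr(b_i)=d$. Writing $d=d_04^ef^2$ with $d_0$ squarefree and $f$ odd, primitivity of $b_i$ forces $d/c_i^2$ to be primitively representable as a sum of three squares, which confines the odd part of $c_i$ to divisors of $f$ and its $2$-part to a bounded range. Two classical inputs enter here: for the squarefree core the content is trivial, so $\rv(d_0)=\rvprim(d_0)$ and Gauss's formula expresses this in terms of the class number of discriminant $-4d_0$ (resp.\ $-d_0$); and the scaling formula for primitive representations, $\rvprim(d_0k^2)=\rv(d_0)\,k\prod_{p\mid k}\left(1-p^{-1}\left(\tfrac{-d_0}{p}\right)\right)$ for odd $k$, which governs the contribution of the square part. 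The two independent choices of $b_1,b_2$ over the core account for the factor $\rv(d_0)^2$.

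I would then exploit that the constraints defining $N(d)$ are local at $2$ and at the odd primes dividing $f$. At an odd prime $p\mid f$, pair-primitivity says exactly that $p$ cannot divide both $c_1$ and $c_2$, so the odd parts of the contents are coprime. Setting $c=c_1c_2$, a divisor of $f$, the number of coprime ordered factorizations $c=c_1c_2$ is $2^{\omega(c)}$; the scalings $f/c_1$ and $f/c_2$ from the primitive-representation formula multiply to $f^2/c$; and the Euler products combine into $\prod_{p\mid f}\left(1-p^{-1}\left(\tfrac{-d_0}{p}\right)\right)^{e_p(f/c)}$, where the exponent $e_p(f/c)$ records for how many of $b_1,b_2$ the prime $p$ still divides the residual square part. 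This produces precisely the divisor sum $f^2\sum_{c\mid f}\frac{2^{\omega(c)}}{c}\prod_{p\mid f}(\cdots)^{e_p(f/c)}$ of \eqref{Formulrd}.

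Finally the prime $2$ is treated separately and determines $c_d$. I would split according to $d\bmod 4$ and, in each case, enumerate the admissible $2$-adic shapes of $(a_1,a_2)$: the $2$-part of the contents, the congruence $a_1\equiv a_2\pmod 2$, and the $2$-adic half of pair-primitivity, namely that $\tfrac14(a_1+a_2)$ or $\tfrac14(a_1-a_2)$ fails to be integral. Counting these configurations and folding in the global factor $\tfrac12$ yields the three values $c_d\in\{1/2,1/6,1/3\}$. I expect this $2$-adic case analysis to be the main obstacle: the parity congruence and the $\tfrac14(a_1\pm a_2)$ condition interact nontrivially, and it is exactly this interaction that must be tracked to pin down $c_d$ in each residue class, with the divisor-sum bookkeeping of the previous paragraph being the second delicate point.
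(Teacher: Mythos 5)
Your proposal follows essentially the same route as the paper's proof: reduce via the Klein map to counting pair-primitive congruent pairs on the sphere $Nr=d$ modulo $\pm1$, decompose each vector into content times primitive part so that pair-primitivity at odd primes forces coprime contents $c_1,c_2$ with $c=c_1c_2\mid f$ contributing $2^{\omega(c)}$ ordered factorizations, apply the scaling formula for $\rvprim(d_0k^2)$ to produce the divisor sum, and isolate the prime $2$ to extract $c_d$ by cases on $d\bmod 4$. The only step you leave as a sketch is the $2$-adic count giving $c_d\in\{1/2,1/6,1/3\}$ (the $1/3$ and $2/3$ reductions from the congruence $a_1\equiv a_2\pmod 2$ and, when $4\mid d$, the condition $a_1\not\equiv a_2\pmod 2$ after dividing by $2$), which is precisely the point the paper also treats by a brief direct enumeration, so the plan is sound and matches the published argument.
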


\begin{proof}
If $d$ is a positive integer, we denote by $\rvprim(d)$ the number of triples $(a,b,c)\in\mathbb{Z}^3$ such that $a^2+b^2+c^2=d$ and $gcd(a,b,c)=1$.
We recall Legendre's theorem, which says that $\rvprim(d)>0$ if and only if $d\not\equiv 0,4,7 \pmod 8$. Therefore $\rp(d)=0$ if $d\equiv 7\pmod 8$.

Let $d\equiv 1,2$ or $3\pmod 4$.
  As in the proof of Corollary 2.7 in \cite{AEW19}, we see that $\rp(d)$ is
  half the number of pair-primitive tuples $(v,v^{\prime})$ such that
  $Nr(v)=Nr(v^{\prime})=d$ and $v\equiv v^{\prime}\pmod 2$. The pair-primitive tuples $(v,v^{\prime})$ are precisely of the form
  $(cw,c^{\prime}w^{\prime})$ with $Nr(w)=\frac{d}{c^2}$,
  $Nr(w^{\prime})=\frac{d}{c^{\prime 2}}$, $w$ and $w^{\prime}$
  primitive vectors, $gcd(c,c^{\prime})=1$ and
  $w\equiv w^{\prime}\pmod 2$. If $d\equiv 3 \pmod 4$ the last congruence condition is automatically satisfied. So writing $d=d_0f^2$ with
  $d_0$ squarefree, we obtain
\begin{equation}\label{Formulrd1}
  \rp(d)=\frac 12\sum_{\substack{c,c^{\prime}|f\\ gcd(c,c^{\prime})=1}}
\rvprim\left(\frac{d}{c^2}\right)\rvprim
\left(\frac{d}{c^{\prime 2}}\right).
\end{equation}

If $d\equiv 1,2\pmod 4$ the only difference is that once $w$ is fixed, the congruence condition $w\equiv w^{\prime}\pmod 2$ cuts down the possibilities for $w^\prime$ by a third.  This is because $w^{\prime}$ must have precisely one odd coordinate (if $d\equiv 1 \pmod 4$)  or one even coordinate (if $d\equiv 2 \pmod 4$) in the same position as the corresponding coordinate in $w.$   Thus
  \begin{equation}\label{Formulrd2}
    \rp(d)=\frac{1}{2}\cdot\frac 13\sum_{\substack{c,c^{\prime}|f\\ gcd(c,c^{\prime})=1}}
\rvprim\left(\frac{d}{c^2}\right)\rvprim\left(\frac{d}{c^{\prime 2}}
\right).
  \end{equation}

Let $d\equiv 0 \pmod 4$.
  In this case the pair-primitives tuples $(w,w^{\prime})$ with $Nr(w)=Nr(w^{\prime})=d$ and $w\equiv w^{\prime}\pmod 2$ are of the form $(2v,2v^{\prime})$ with $(v,v^{\prime})$ pair-primitive, $Nr(v)=Nr(v^{\prime})=\frac{d}{4}$ and $v\not\equiv v^{\prime}\pmod 2$.  In particular we have $\rp(d)=0$ if $\frac{d}{4}\equiv 0,3\pmod 4$. Next we suppose that $\tfrac{d}{4}\equiv 1,2\pmod 4$. The pair-primitive tuples
  $(v,v^{\prime})$ are of the form $(cw,c^{\prime}w^{\prime})$ with
  $Nr(w)=\frac{d}{4c^2}$, $Nr(w^{\prime})=\frac{d}{4c^{\prime 2}}$,
  $w$ and $w^{\prime}$ primitive vectors, $gcd(c,c^{\prime})=1$ and $w\not\equiv w^{\prime}\pmod 2$. This reduces the choices of
  $w^{\prime}$ to two thirds of the options. So writing
  $\frac{d}{4}=d_0f^2$ with $d_0$ squarefree and $f$ odd, we obtain
  \begin{equation}\label{Formulrd3}
\rp(d)=\frac 12\cdot\frac{2}{3}\sum_{\substack{c,c^{\prime}|f\\ gcd(c,c^{\prime})=1}}
\rvprim\left(\frac{d}{4c^2}\right)
\rvprim\left(\frac{d}{4c^{\prime 2}}\right).
 \end{equation}

Let $n$ be a positive integer and write $n=n_0m^2$ with $n_0$ the squarefree part of $n$. We suppose that $m$ is odd.
We have the following formula for $\rvprim(n)$ (cf. \cite{ch07}):
\begin{equation*}
\rvprim(n)=r_3(n_0)m
\prod_{p|m}\left(1-p^{-1}\left(\frac{-n_0}{p}\right)\right)
\end{equation*}
Thus
$$\rvprim\left(\frac{d}{c^2}\right)
\rvprim\left(\frac{d}{c^{\prime 2}}\right)
=r_3\left(d_0\right)^2\frac{f^2}{cc^{\prime}}
\prod_{p|f}\left(1-p^{-1}\left(\frac{-d_0}{p}\right)\right)^{e_p},$$
where
\begin{center}
$e_p=e_p\left(\tfrac{f}{cc^{\prime}}\right)=$
$\left\{
   \begin{array}{ll}
     2 & \hbox{if $p| \frac{f}{cc^{\prime}}$}, \\
     1 & \hbox{if $p\nmid \frac{f}{cc^{\prime}}$.}
   \end{array}
 \right.$
\end{center}
Using this expression in \eqref{Formulrd1}, \eqref{Formulrd2} and \eqref{Formulrd3} we obtain \eqref{Formulrd}.
\end{proof}

If $d_0> 3$, we can also write Equation \eqref{Formulrd} in terms of class
numbers using the formulas
\begin{equation}\label{r3ClassNum}
r_3(d_0)=
\left\{
   \begin{array}{ll}
     24h_K & \hbox{when $d_0\equiv 3 \pmod 8$}, \\
     12h_K & \hbox{when $d_0\equiv 1,2 \pmod 4$},
   \end{array}
 \right.
\end{equation}
where $K=\mathbb{Q}(\sqrt{-d_0})$ and $h_K$ is the class number of $K$ (see \cite[Prop. 2.3]{EMV}).

\subsection{The Dirichlet series $\rs(w)$.}

Proposition \ref{prop:r} also allows us to obtain expressions for the
Dirichlet series
\begin{equation}
  \label{eq:rseries}
\rs(w):=Z_P(\id;w)=\sum_{d=1}^{\infty} \frac{\rp(d)}{d^{w}}.
\end{equation}
As these formulas depend on $d$ mod 4 we split our sum into
distinct congruence classes mod 4.
For simplicity, we focus on the case $d\equiv
3\pmod 4$.  The other case are handled similarly. Let
\begin{equation}
  \label{eq:rseries2}
\rs^{(3)}(w)=\sum_{d\equiv 3\!\!\!\!\!\pmod {\! 4}}
\frac{\rp(d)}{d^{w}}
=\sum_{\substack{d_0\equiv 3\!\!\!\!\!\pmod {\! 4}\\
\square\mbox{\scriptsize -free}}}\sum_{f\geq 1, \mbox{\scriptsize odd}}
\frac{\rp(d_0f^2)}{(d_0f^2)^{w}}.
\end{equation}
In fact, since $\rp(d)=0$ for $d\equiv 7\pmod{8}$, the above sum is only
over $d\equiv 3\pmod{8}$. 
\begin{thm}\label{thm:rs3}
  For $\re(w)$ sufficiently large,
  \begin{equation}
    \label{eq:rs3thm}
\rs^{(3)}(w)=\sum_{\substack{d_0\equiv 3\!\!\!\!\!\pmod {\! 4}\\
\square\mbox{\scriptsize -free}}}
\frac{r_3(d_0)^2P_{d_0}(w)}{d_0^{w}}
  \end{equation}
where $P_{d_0}$ is given by the Euler product
$$P_{d_0}(w)=\prod_p P\left(p^{-w},
{\scriptstyle {\left(\frac {-d_0}{p}\right)}}\right),$$
and
\begin{equation}
  \label{def:Pp}
  P(y,\epsilon)=
\frac{1+(\epsilon^2-2\epsilon+p-2\epsilon p)y^2+\epsilon^2py^4}
{(1-py^2)(1-p^2y^2)}.
\end{equation}
\end{thm}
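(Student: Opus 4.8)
The plan is to start from the explicit formula \eqref{Formulrd} of Proposition \ref{prop:r}. Since every $d\equiv 3\pmod 4$ is odd, in the decomposition $d=d_0 4^e f^2$ we have $e=0$ and $f$ odd, and the constant is $c_d=\tfrac12$. Substituting this into \eqref{eq:rseries2} and interchanging the order of summation (legitimate for $\re w$ large, where everything converges absolutely), the outer sum over squarefree $d_0\equiv 3\pmod 4$ pulls out a factor $r_3(d_0)^2 d_0^{-w}$, and it remains to identify, for each fixed $d_0$, the inner sum over odd $f$ with the Euler product attached to $d_0$. Writing $\chi(p)=\left(\tfrac{-d_0}{p}\right)$ and $a(f)=\sum_{c\mid f}\tfrac{2^{\omega(c)}}{c}\prod_{p\mid f}(1-p^{-1}\chi(p))^{e_p(f/c)}$, this inner sum is $\sum_{f\text{ odd}} f^{2-2w}a(f)$.

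The first key step is to observe that $f\mapsto f^{2-2w}a(f)$ is multiplicative. This follows because a coprime factorization $f=f_1f_2$ induces coprime factorizations $c=c_1c_2$ of each divisor, the weight $2^{\omega(\cdot)}/(\cdot)$ is multiplicative, and for a prime $p\mid f_1$ the exponent $e_p(f/c)$ depends only on whether $p\mid f_1/c_1$; hence the product over $p\mid f$ splits. Consequently $\sum_{f\text{ odd}} f^{2-2w}a(f)=\prod_{p\text{ odd}} L_p$, where $L_p=\sum_{k\ge 0} p^{k(2-2w)}a(p^k)$ is the local factor.

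The heart of the argument, and the step I expect to be the main obstacle, is evaluating $L_p$ in closed form and matching it with $P(p^{-w},\chi(p))$ from \eqref{def:Pp}. Setting $X=p^{2-2w}$ and $\beta=1-p^{-1}\chi(p)$, one reads off from the definition that $a(1)=1$ and, for $k\ge 1$, $a(p^k)=\beta^2\left(1+2\sum_{j=1}^{k-1}p^{-j}\right)+2\beta p^{-k}$. Summing the resulting three series in $k$ (the middle one being a double sum whose inner geometric progression must be resummed first, after which a factor $p-1$ cancels) collapses $L_p$ to a single rational function with denominator $(1-X)(p-X)$. The delicate part is then purely algebraic: after clearing denominators one must check that the numerator equals $p$ times the numerator in \eqref{def:Pp}, using $X=p^2 y^2$ with $y=p^{-w}$ and $(1-\beta)^2=p^{-2}\chi(p)^2$ to match the coefficients of $1$, $y^2$ and $y^4$ separately. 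I expect this coefficient matching to be the only genuinely computational portion of the proof.

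Finally I would reconcile the leftover constant $\tfrac12$ with the Euler factor at the place $p=2$ in the stated product, which requires some care. The point is that $r_3(d_0)=0$ unless $d_0\equiv 3\pmod 8$ (Legendre's theorem kills $d_0\equiv 7\pmod 8$), so only $d_0\equiv 3\pmod 8$ contribute; for all of these $\left(\tfrac{-d_0}{2}\right)=-1$ is constant, so the $p=2$ bookkeeping is uniform across the surviving $d_0$ and can be handled once and for all. Assembling the odd local factors $P(p^{-w},\chi(p))$ with this $p=2$ adjustment produces $P_{d_0}(w)$ and completes the identification of $\rs^{(3)}(w)$ with the claimed Dirichlet series.
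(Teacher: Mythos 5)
Your proposal follows essentially the same route as the paper's proof: substitute the formula of Proposition \ref{prop:r} into \eqref{eq:rseries2}, interchange the sums, recognize the inner sum over odd $f$ as an Euler product, and evaluate the local factor in closed form. Your expression $a(p^k)=\beta^2\bigl(1+2\sum_{j=1}^{k-1}p^{-j}\bigr)+2\beta p^{-k}$ is exactly the bracketed quantity in the paper's display \eqref{eq:ppartP}, and the resummation does collapse to a rational function with denominator $(1-X)(p-X)$, $X=p^{2-2w}$, whose numerator is $p$ times that of \eqref{def:Pp}; so the computational heart of your argument is correct and identical to the paper's. The only soft spot is your final step: the constant $c_d=\tfrac12$ cannot be ``reconciled with the Euler factor at $p=2$,'' because $P(2^{-w},-1)$ is a nonconstant function of $w$ and the product defining $P_{d_0}(w)$ must in any case be read as a product over odd primes (as the comparison with $Q_{d_0}$ in Theorem \ref{thm:PQ}, which matches only odd $p$-parts, confirms). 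In fact the paper's own proof silently drops the factor $\tfrac12$ in its first display, so your more careful bookkeeping exposes a constant discrepancy between Proposition \ref{prop:r} and the statement \eqref{eq:rs3thm} rather than a gap in your method; you should simply carry the $\tfrac12$ through (or resolve it against the constant $c_d$) instead of trying to bury it in a $2$-adic Euler factor.
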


\begin{proof}
  By (\ref{eq:rseries}) and Proposition \ref{prop:r}, we have
$$\rs^{(3)}(w)=\sum_{\substack{d_0\equiv 3\!\!\!\!\!\pmod {\! 4}\\
\square\mbox{\scriptsize -free}}}\frac{r_3(d_0)^2}{d_0^{w}}
\left[\sum_{f\geq 1, \mbox{\scriptsize odd}}
f^{2-2w}
\sum_{c|f}\dfrac{2^{\omega(c)}}{c}
\prod_{p|f}\left(1-p^{-1}\left(\dfrac{-d_0}{p}\right)\right)^{e_p(f/c)}
\right].$$
The inner sum over $f$ is an Euler product, and setting
$y=p^{-w}, A=1-p^{-1}\left(\tfrac{-d_0}{p}\right)$, its $p$-part is
\begin{equation}
  \label{eq:ppartP}
  P(y,\epsilon)=1+p^2y^2A\left(A+\frac{2}{p}\right) +\sum_{k=2}^\infty A p^{2k}y^{2k}
  \left[A+\frac{2A}{p}\frac {1-p^{1-k}}{1-p^{-1}} +\frac{2}{p^k}\right]
\end{equation}
for $\epsilon=\left(\tfrac{-d_0}{p}\right)$. Summing the geometric
series above and combining, we arrive at (\ref{def:Pp}).
\end{proof}


\section{Weyl group multiple Dirichlet series}
\label{sec:wmds}

In this section, we show that the Dirichlet series constructed from the
coefficients counting planes in $\Z^4$ coincides with a specialization
of a multiple Dirichlet series arising in the Fourier expansion of the
minimal parabolic Eisenstein series on a metaplectic double cover of
$\GL(4).$  In a more general context Brubaker, Bump and Friedberg \cite{bbf-xtal}
have expressed the Fourier coefficients of the Eisenstein series on the
$n$-fold cover of $\GL(r)$ in terms of crystal bases.  We will instead
use formulas of Chinta and Gunnells which Brubaker, Bump, Friedberg
and Hoffstein \cite{bbfh} have shown to be equal to the ones in
\cite{bbf-xtal}. Actually \cite{bbf-xtal} works over a number field
containing a $4^{th}$ root of unity; the formulas over $\Q$ require a
modification at the prime $2$, which fortunately plays no role in the
present work.  We refer the reader to Karasiewicz \cite{edk} for the
analogous formulas on the double cover of $\GL(3)$ over $\Q$.

We now define the multiple Dirichlet series to which we must compare
$\rs(w)$.  This is the $A_3$ quadratic Weyl group multiple Dirichlet
series and arises in the Fourier expansion of the Borel Eisenstein
series on a metaplectic double cover of $\GL(4).$ There are various
ways to define this series, but we follow the presentation of
Chinta-Gunnells \cite{cg-inv}.

For a quadratic character $\chi:(\Z/d\Z)^\times\to \{1,-1\}$ define the Dirichlet series
\begin{align*}
  L(s,\chi)&=\prod_p\left(1-\frac{\chi(p)}{p^s}\right)^{-1}
    = \sum_{n=1}^\infty \frac{\chi(n)}{n^s}, \mbox{ and}\\
  L_2(s,\chi)&=\prod_{p\neq 2}\left(1-\frac{\chi(p)}{p^s}\right)^{-1}    =
\sum_{n \text{ odd}} \frac{\chi(n)}{n^s}.\\
\end{align*}
Let $\psi_1, \psi_2, \psi_3$ be three
primitive, quadratic Dirichlet characters unramified away from 2. Thus
each of the $\psi_i$ is either trivial or one of
$\chi_{-4}, \chi_8, \chi_{-8}.$ Define
\begin{equation}
  \label{def:A3Z}
Z_{A_3}(s_1,s_2,w;\psi_1,\psi_2,\psi_3)=\sum_{\substack{d,n_1,n_2>0\\ odd}}
\frac{\chi_{d'}(\hat n_1)\chi_{d'}(\hat n_2)}{n_1^{s_1}n_2^{s_2}
d^{w}}a(n_1,n_2,d)\psi_1(n_1)\psi_2(n_2)\psi_3(d),
\end{equation}
where
\begin{itemize}
\item $d'=(-1)^{(d-1)/2} d$ and $\chi_{d'}$ is the Kronecker
  symbol associated to the squarefree part of $d'$
\item $\hat n$ is the part of $n$ relatively prime to the
  squarefree part of $d$
\item the coefficients $a(n_1,n_2,d)$ are weakly multiplicative in all
entries and are defined on prime powers by
\begin{align}
  \label{def:A3a}\nonumber
H(x_1,x_2,y)&=\sum_{k,l,m} a(p^k,p^l,p^m) x_1^kx_2^ly^m\\
&=
\frac{1-x_1y-x_2y+x_1x_2y+px_1x_2y^2-px_1x_2^2y^2-px_1^2x_2y^2-px_1^2x_2^2y^3}
{(1-x_1)(1-x_2)(1-y)(1-px_1^2y^2)(1-px_2^2y^2)(1-p^2x_1^2x_2^2y^2)}.
\end{align}
\end{itemize}
As shown in \cite{cg-inv}, we can write this as
\begin{equation}\nonumber
  \label{eq:Zqmds}
  \begin{split}
Z_{A_3}(s_1,s_2,w;&\psi_1,\psi_2,\psi_3)=\\
&\sum_{\substack{d_0>0\\ odd,\square\mbox{-}free }}
\frac{L_2(s_1,\chi_{d_0'}\psi_1)
L_2(s_2,\chi_{d_0'}\psi_2)\psi_3(d_0)}{d_0^w}
Q_{d_0}(s_1,s_2,w;\psi_1,\psi_2),
  \end{split}
\end{equation}
say, where $Q_{d_0}$ is the Euler product
\begin{equation}\label{def:Qd}
Q_{d_0}(s_1,s_2,w;\psi_1,\psi_2)=
\prod_{p\ odd}Q_{d_0,p}(\epsilon_{1,p}p^{-s_1},
\epsilon_{2,p}p^{-s_2},p^{-w})
\end{equation}
with $\epsilon_{1,p}=\chi_{d_0'}(\hat p)\psi_1(p),
\epsilon_{2,p}=\chi_{d_0'}(\hat p)\psi_2(p)$ and
\begin{equation}
  \label{def:Qp}
Q_{d_0,p}(x_1,x_2,y)=
\begin{cases}
  \frac {H(x_1,x_2,y)+H(x_1,x_2,-y)}{2}(1-x_1)(1-x_2)
& \mbox{if }p\nmid d_0,\\
  \frac {H(x_1,x_2,y)-H(x_1,x_2,-y)}{2}
& \mbox{if }p| d_0.\\
\end{cases}
\end{equation}

To go further it is convenient to divide the sum over $d$ into congruence
classes mod $8$.  As in the computation of $\rs^{(3)}(s)$ above, we will
concentrate on the case $d\equiv 3\pmod{8}.$  Define
\begin{align*}
  Z_{A_3}^{(3)}(s_1,s_2,w)&= \tfrac 14\left[Z_{A_3}(s_1,s_2,w;1,1,1)
-Z_{A_3}(s_1,s_2,w;1,1,\chi_{-4})\right.\\
&\qquad \qquad\left.
{}- Z_{A_3}(s_1,s_2,w;1,1,\chi_8)+Z_{A_3}(s_1,s_2,w;1,1,\chi_{-8})\right] \\
&=\sum_{\substack{0<d_0\equiv 3\!\!\!\!\!\pmod {\! 8}\\
\square\mbox{\scriptsize -free}}}
\frac{L_2(s_1,\chi_{-d_0})
L_2(s_2,\chi_{-d_0})}{d_0^w}
Q_{d_0}(s_1,s_2,w;1,1).
\end{align*}
If $s_1=s_2=1$, then we have
\begin{align}
  \label{eq:Z_s=1}
Z_{A_3}^{(3)}(1,1,w)&= \sum_{\substack{0<d_0\equiv 3\!\!\!\!\!\pmod {\! 8}\\
\square\mbox{\scriptsize -free}}}
\frac{L_2(1,\chi_{-d_0})^2}{d_0^w}Q_{d_0}(1,1,w;1,1)\\ \nonumber
&=\frac{9}{4}\sum_{\substack{0<d_0\equiv 3\!\!\!\!\!\pmod {\! 8}\\ \nonumber
\square\mbox{\scriptsize -free}}}
\frac{L(1,\chi_{-d_0})^2}{d_0^w}Q_{d_0}(1,1,w;1,1)\\ \nonumber
&=\frac{9}{4}\cdot\left(\frac{\pi^2}{576}\right)
\sum_{\substack{0<d_0\equiv 3\!\!\!\!\!\pmod {\! 8}\\
\square\mbox{\scriptsize -free}}}
\frac{r_3(d_0)^2}{d_0^{w+1}}Q_{d_0}(1,1,w;1,1)
\end{align}
where we have used that for squarefree $d_0\equiv 3\pmod 8$. We have
\begin{equation}
  \label{eq:L2}\nonumber
1-\frac{\chi_{-d_0}(2)}{2}=\frac 32,
\end{equation}
and by \cite[Chap. 5, 1.1 Thm. 2]{BoreShafa} and \eqref{r3ClassNum}, it follows that
\begin{equation}
  \label{eq:r3}\nonumber
L(1,\chi_{-d_0})=\frac {\pi r_3(d_0)}{24\sqrt {d_0}}.
\end{equation}
Let us write $Q_{d_0}(w)$ for $Q_{d_0}(1,1,w;1,1).$

\begin{thm}
  \label{thm:PQ}
We have
$$\frac{\pi^2}{256}\zeta_2(2w)\zeta_2(2w-1)\rs^{(3)}(w)=Z_{A_3}^{(3)}(1,1,w-1).
$$
\end{thm}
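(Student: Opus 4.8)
The plan is to prove the identity by expanding both sides as Dirichlet series indexed by the squarefree parameter $d_0$ and then reducing to an Euler-factor-by-Euler-factor comparison at each odd prime. First I would substitute the explicit evaluations already in hand. Theorem \ref{thm:rs3} gives
\[
\rs^{(3)}(w)=\sum_{\substack{d_0\equiv 3\bmod 8\\ \square\text{-free}}}\frac{r_3(d_0)^2\,P_{d_0}(w)}{d_0^{w}}
\]
(the congruence may be taken mod $8$ because $r_3(d_0)=0$ for $d_0\equiv 7\bmod 8$), while specializing the computation leading to \eqref{eq:Z_s=1} and replacing $w$ by $w-1$ gives
\[
Z_{A_3}^{(3)}(1,1,w-1)=\frac{9}{4}\cdot\frac{\pi^2}{576}\sum_{\substack{d_0\equiv 3\bmod 8\\ \square\text{-free}}}\frac{r_3(d_0)^2\,Q_{d_0}(w-1)}{d_0^{w}}.
\]
Here the constants $\tfrac94$ and $\tfrac{\pi^2}{576}$ come exactly from $L_2(1,\chi_{-d_0})^2=\tfrac94 L(1,\chi_{-d_0})^2$ and $L(1,\chi_{-d_0})=\tfrac{\pi r_3(d_0)}{24\sqrt{d_0}}$ as recorded above, and $\tfrac94\cdot\tfrac{\pi^2}{576}=\tfrac{\pi^2}{256}$. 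Thus both sides of the theorem are $\tfrac{\pi^2}{256}$ times a Dirichlet series supported on the same squarefree $d_0\equiv 3\bmod 8$, carrying the identical arithmetic weight $r_3(d_0)^2/d_0^{w}$. It therefore suffices to establish, for each such $d_0$, the Euler-product identity
\[
\zeta_2(2w)\,\zeta_2(2w-1)\,P_{d_0}(w)=Q_{d_0}(w-1),
\]
since multiplying this by $r_3(d_0)^2/d_0^{w}$ and summing converts the left expansion into the right one.

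Next I would reduce this per-$d_0$ identity to a local statement. Recalling $\zeta_2(s)=\prod_{p\neq 2}(1-p^{-s})^{-1}$, both sides are Euler products over odd primes, so I compare local factors. Writing $y=p^{-w}$, the local factor at $p$ of $\zeta_2(2w)\zeta_2(2w-1)$ is $\big[(1-y^2)(1-py^2)\big]^{-1}$ (using $p^{-2w}=y^2$ and $p^{-(2w-1)}=py^2$), while the local factor of $P_{d_0}(w)$ is $P(y,\epsilon)$ with $\epsilon=\big(\tfrac{-d_0}{p}\big)$ as in \eqref{def:Pp}. On the right, \eqref{def:Qd} gives the local factor $Q_{d_0,p}(\epsilon_p/p,\,\epsilon_p/p,\,p^{1-w})$, where $\epsilon_p=\chi_{-d_0}(\hat p)$ equals $\epsilon$ for $p\nmid d_0$ and equals $1$ for $p\mid d_0$; note the third argument $p^{1-w}=py$ is precisely what encodes the shift $w\mapsto w-1$. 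Hence the desired local identity is
\[
\frac{P(y,\epsilon)}{(1-y^2)(1-py^2)}=Q_{d_0,p}\!\left(\frac{\epsilon_p}{p},\frac{\epsilon_p}{p},\,py\right),
\]
and the two extra zeta factors on the left are exactly what supply the additional Euler factors required to match the metaplectic normalization.

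The main work, and the step I expect to be the real obstacle, is the explicit verification of this local identity; I would split into the two cases dictated by \eqref{def:Qp}. For $p\nmid d_0$ one sets $x_1=x_2=\epsilon/p$ in the generating function $H$ of \eqref{def:A3a}, forms the even part $\tfrac12\big(H(\cdot,\cdot,Y)+H(\cdot,\cdot,-Y)\big)(1-x_1)(1-x_2)$, and simplifies: the prefactor $(1-x_1)(1-x_2)$ cancels the $(1-x_i)$ in the denominator of $H$, and after substituting $Y=py$ one checks that the remaining denominator becomes $(1-y^2)(1-py^2)^2(1-p^2y^2)$, matching $(1-y^2)(1-py^2)$ times the denominator $(1-py^2)(1-p^2y^2)$ of $P(y,\epsilon)$. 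For $p\mid d_0$ one instead takes the odd part $\tfrac12\big(H-\widetilde H\big)$ with $\epsilon_p=1$ while on the left $\epsilon=0$, so that $P(y,0)=\tfrac{1+py^2}{(1-py^2)(1-p^2y^2)}$. In both cases, once the denominators are seen to agree, the content is an equality of explicit numerator polynomials in $y$ (with $p$ a parameter), confirmed by direct expansion and the relations $\epsilon^2=1$, $\epsilon^3=\epsilon$. The delicate bookkeeping is entirely in this last comparison: one must track the Kronecker symbol $\chi_{-d_0}$ and the definition of $\hat p$ correctly, keep the parity of the numerator terms of $H$ straight under the $Y\mapsto -Y$ averaging, and confirm that the prime $2$ contributes no factor on either side (both products run over odd primes, and the mod-$8$ reduction together with the passage from $L$ to $L_2$ already absorbs all factors of $2$). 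Verifying that the numerator polynomials match in every case is where the proof stands or falls.
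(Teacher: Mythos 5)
Your proposal follows essentially the same route as the paper's proof: reduce to the per-$d_0$ Euler product identity $\zeta_2(2w)\zeta_2(2w-1)P_{d_0}(w)=Q_{d_0}(w-1)$, then compare local factors at each odd prime by substituting $x_1=x_2=\epsilon_p/p$ and $y\mapsto py$ into $H$ and checking the three cases $\epsilon=1,-1,0$ against \eqref{eq:Pepsilon}. The reduction steps, the identification of the constant $\tfrac{9}{4}\cdot\tfrac{\pi^2}{576}=\tfrac{\pi^2}{256}$, and the final polynomial verification all match the paper's argument.
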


\begin{proof}
Comparing  (\ref{eq:rs3thm}) with the last line of (\ref{eq:Z_s=1})
we see that we need to prove
\begin{equation}\label{eq:PQid}
\zeta_2(2w)\zeta_2(2w-1)P_{d_0}(w)=Q_{d_0}(w-1)
\end{equation}
As both sides are Euler products it suffices to show that the
$p$-parts match, for all odd primes $p$. Let $\epsilon=\chi_{d_0}(p).$
From (\ref{def:Pp}) of Theorem \ref{thm:rs3} the $p$-part of the
lefthand side of (\ref{eq:PQid}) is
\begin{equation}
\label{eq:Pepsilon}
\frac{P(y,\epsilon)}{(1-y^2)(1-py^2)}=
\begin{cases}
  \dfrac{1}{(1-py^2)(1-p^2y^2)}& \mbox{if } \epsilon=1,\\
\dfrac{1+3(p+1)y^2+py^4}{(1-y^2)(1-py^2)^2(1-p^2y^2)}& \mbox{if } \epsilon=-1,\\
\dfrac{1+py^2}{(1-y^2)(1-py^2)^2(1-p^2y^2)}& \mbox{if } \epsilon=0.\\
\end{cases}
\end{equation}
On the other hand, the $p$-part of the righthand side is given in
(\ref{def:Qd}), (\ref{def:Qp}) to be
\begin{equation}
  \label{eq:Qdp}
Q_{d_0,p}(\tfrac 1p, \tfrac 1p, py)=
\begin{cases}
    \frac {H(\tfrac 1p,\tfrac 1p, py)+H(\tfrac 1p,\tfrac 1p, -py)}{2}
(1-\tfrac 1p)^2
& \mbox{if }\epsilon=1,\\
    \frac {H(-\tfrac 1p,-\tfrac 1p, py)+H(-\tfrac 1p,-\tfrac 1p, -py)}{2}
(1+\tfrac 1p)^2& \mbox{if }\epsilon=1,\\
  \frac {H(\tfrac 1p,\tfrac 1p, py)-H(\tfrac 1p,\tfrac 1p, -py)}{2}
& \mbox{if }\epsilon=0.\\
\end{cases}
\end{equation}
Using the definition of $H$ in (\ref{def:A3a}) we readily verify that
$p$-parts of (\ref{eq:Pepsilon}) and (\ref{eq:Qdp}) match up in each
of the 3 cases.
\end{proof}


\section{Relations between the four CM-points}
\label{sec:4cm}

For a given lattice $L$ in $\gr$, Aka-Einsiedler-Wieser \cite{AEW19} define four associated $CM$-points $$z_1(L),\ldots, z_4(L)\in\X:=\PGL(\Z)\backslash \PGL(\R)/\PO$$ and study the joint distribution of the sets
\begin{equation*}
\{(L,z_1(L), z_2(L), z_3(L), z_4(L)):L\in\Rp(n)\}\subset\gr\times\X^4
\end{equation*}
for large $n$.  In this section we recall the definitions of the four CM-points
and describe arithmetic relations between them.  We will phrase our results in terms of the classes of binary quadratic forms corresponding to the CM-points.  For clarity of exposition, we will present the main results of this section only for $n$ squarefree and congruent to 1 mod 4.  The case of $n\equiv 3 \pmod 4$ is similar but complicated by the appearance of imprimitive binary quadratic forms corresponding to the geometric $CM$-points defined below.

In what follows, if $q$ is a quadratic form defined on a two dimensional sublattice $M=\langle u,v\rangle$  of $\Z^d$, we let $[q]$ denote the $\GL_2(\Z)$ equivalence class of the binary quadratic form defined by
\begin{equation}
  \label{eq:q}
(x,y)\mapsto q(xu+yv).
\end{equation}
Of course the quadratic form above depends on the choice of basis, but its    $\GL_2(\Z)$ equivalence class is well defined.
The CM-points $z_1(L)$ and $z_2(L)$ are defined to be the points in $\X$ corresponding to the classes of the binary quadratic forms $[Q_L]$ and $[Q_{L^\bot}]$, respectively.  These are called the \emph{geometric CM-points} in \cite{AEW19}.  To define $z_3(L)$ and $z_4(L)$---termed the \emph{accidental CM-points} in \cite{AEW19}---we use the identification of $\gr$ with the subset of $\mathbf{K}(\mathbb{Z})$ in Proposition \ref{prop:K} provided by the Klein map $\Phi$.  Recall that
\[\Phi(L)=[a_1(L), a_2(L)]
\]
where the $a_1(L), a_2(L)$ defined in  (\ref{eq:klein_map}) are traceless integral quaternions, which we identify with $\Z^3$ in the natural way: $x\mathbf{i}+y\mathbf{j}+z\mathbf{k}\mapsto (x,y,z)$.  For $i=1,2$ define the two-dimensional sublattices $M_i=a_i(L)^\bot\subset \Z^3.$
We define $z_3(L)$ to be the point corresponding to the class of the binary quadratic form $Q|_{M_1}$ obtained by restricting $Q$ to $M_1$.  Similarly  $z_4(L)$ is the point corresponding to the form $[Q|_{M_2}]$.  For $M$ a sublattice of $\Z^3$ we use the notation $Q_{M}=Q|_{M}$.  Here we think of $M\subset \Z^3$ as a sublattice in $\Z^4$ by embedding $(a,b,c)\mapsto (0,a,b,c).$

We take this opportunity to recall Gauss's result on which binary quadratic forms can arise as a restriction of the ternary quadratic form $Q_3(x,y,z)=x^2+y^2+z^2$ to a two dimensional sublattice of $\Z^3.$  Let $\mathcal G$ be the map from nonzero vectors in $\Z^3$ to $\GL_2(\Z)$ equivalence classes of integral binary quadratic forms defined by
\begin{equation}
  \label{eq:6}
v\mapsto [Q_3|_{v^\bot}].
\end{equation}
It is easy to see that if $Q_3(v)=n$ then $\disc\mathcal G(v)=-4n$.  In fact, Gauss \cite{Gauss} proves the following sharper result about the image of $\mathcal G.$  Let $\mathcal G_n$ denote the image of $\mathcal G$ restricted to $\Rv(n)$:
\begin{equation}
  \label{eq:9}
\mathcal G_n=\{\mathcal G(v): v\in\Rv(n)\}.
\end{equation}
Then

\begin{thm}\label{GaussThMain}
Let $n\equiv 1,2 \pmod 4$ be a positive squarefree integer and $v\in\Rv(n).$
The image $\mathcal G_n$  consists of a single genus class of binary quadratic forms of discriminant $-4n$.
\end{thm}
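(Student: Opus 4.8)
The plan is to prove the two assertions implicit in the statement separately: (i) \emph{containment}, that every form $\mathcal G(v)$ with $v\in\Rv(n)$ lies in one and the same genus $\mathcal H$ of discriminant $-4n$, and (ii) \emph{surjectivity}, that conversely every class of $\mathcal H$ equals $\mathcal G(v)$ for some $v\in\Rv(n)$. First note that squarefreeness of $n$ forces each $v\in\Rv(n)$ to be primitive: a common divisor $d$ of the coordinates of $v$ would give $d^2\mid n$, hence $d=1$. Therefore $v^\bot$ is a primitive rank-$2$ sublattice, the Gram determinant of $Q_3|_{v^\bot}$ equals $n$, and $\disc\mathcal G(v)=-4n$, as already observed before the statement.

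For containment I would invoke the local criterion for the genus: two positive definite binary forms of the same discriminant lie in one genus exactly when they are $\Z_p$-equivalent for every prime $p$ (the real place being automatic, as both are positive definite of determinant $n$). Given $v,w\in\Rv(n)$ it thus suffices to find, for each $p$, an element $g_p\in O(3,\Z_p)$ with $g_p v=w$; such a $g_p$ carries $v^\bot$ isometrically onto $w^\bot$ and identifies $Q_3|_{v^\bot}$ with $Q_3|_{w^\bot}$ over $\Z_p$. Everything then reduces to the transitivity of $O(3,\Z_p)$ on primitive vectors of norm $n$ in $\Z_p^3$. For odd $p$ this is the standard extension theorem for the orthogonal group of a unimodular $\Z_p$-lattice (cf. \cite{Cas78}). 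The delicate case, and the first place the hypothesis $n\equiv 1,2\pmod 4$ is used, is $p=2$: here $Q_3$ is not naively diagonalizable over $\Z_2$ and transitivity on primitive norm-$n$ vectors must be checked by hand, the congruence condition being precisely what collapses the $2$-adic vectors of norm $n$ into a single $O(3,\Z_2)$-orbit.

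The main obstacle is surjectivity. I would recast it as a realization problem: a class $[q]$ of discriminant $-4n$ belongs to $\mathcal G_n$ exactly when the binary lattice $(\Z^2,q)$ admits a primitive isometric embedding into $(\Z^3,Q_3)$ whose orthogonal complement is the rank-$1$ lattice $\langle n\rangle$; equivalently, $(\Z^3,Q_3)$ must arise as a unimodular overlattice of index $n$ of $\langle n\rangle\perp(\Z^2,q)$. By the containment step this embedding exists $p$-adically at every $p$ whenever $[q]\in\mathcal H$, so the obstruction is purely global. Passing from local to global is the crux. I would handle it through the classical dictionary between the action of $\mathrm{SO}(3)$ and binary quadratic forms---the Clifford/quaternion parametrization underlying the Klein map of Section~\ref{sec:formula}---under which $\mathrm{SO}(3,\Z)$-orbits on $\Rv(n)$ correspond to proper classes of forms of discriminant $-4n$, and genus theory then identifies the resulting set of classes as a full genus. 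This is exactly Gauss's theorem \cite{Gauss}, and the congruence hypothesis $n\equiv1,2\pmod4$ reappears here to pin down which genus occurs and to keep the $2$-adic glue unobstructed; a counting check comparing $\rvprim(n)$ to the mass of $\mathcal H$ (in the spirit of \cite{AEW19}) then confirms that no class of $\mathcal H$ is missed.
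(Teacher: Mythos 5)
The paper itself offers no proof of Theorem \ref{GaussThMain}: the statement is quoted directly from Gauss \cite{Gauss}, so there is no internal argument to measure yours against. Your reading of the statement as two assertions (containment in a genus, and surjectivity onto that genus) is correct, and your containment argument is sound in outline: genus membership for positive definite binary forms of equal discriminant is equivalent to $\Z_p$-equivalence at every prime, and transitivity of $O(3,\Z_p)$ on primitive vectors of norm $n$ (standard for odd $p$ from the theory of unimodular $\Z_p$-lattices, and requiring the hypothesis $n\equiv 1,2\pmod 4$ at $p=2$, which you correctly flag but do not verify) carries $Q_3|_{v^\bot}$ to $Q_3|_{w^\bot}$ locally everywhere. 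That half is more than the paper supplies.

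The genuine gap is in surjectivity, which is where essentially all of the content of the theorem lives. You correctly recast it as a local-to-global problem for primitive isometric embeddings of $(\Z^2,q)$ into $(\Z^3,Q_3)$ with complement $\langle n\rangle$, and you correctly observe that the local obstructions vanish for every class in the genus. But you then resolve the global problem by asserting that ``genus theory identifies the resulting set of classes as a full genus'' and remarking that ``this is exactly Gauss's theorem'' --- that assertion \emph{is} the statement under proof, so the argument is circular precisely at its crux. Local primitive representability does not imply global primitive representability in general; closing the gap requires real input, namely the quaternionic parametrization of $\Rv(n)$ under which $\cl(-4n)$ acts on orbits of primitive representations and $\mathcal G$ is equivariant for the induced map to the genus group, together with a proof that this action is transitive. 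Your proposed fallback, a ``counting check comparing $r_{3,prim}(n)$ to the mass of $\mathcal H$,'' cannot substitute for this: deducing from a count that no class is missed requires knowing that every class in the image is hit the same number of times, and that equidistribution of the fibers of $\mathcal G$ is again part of Gauss's theorem rather than an a priori fact. Either develop the torsor structure explicitly or do as the paper does and cite Gauss for the entire statement; as written, the surjectivity half is not proved.
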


Returning to the case of planes in $\Z^4$, let $L\in\Rp(n)$. It follows from Lemma \ref{EquaDisc} and Proposition \ref{WedgePerp} that $\disc(Q_L)=\disc(Q_{L^{\bot}})=-4n$. Consider the pairs $(L,Q_L)$ and $(L^{\bot},Q_{L^{\bot}})$. As $(L^{\bot},Q_{L^{\bot}})$ is determined by $(L,Q_L)$, we want to find an intrinsic relation between the two pairs and to also describe the relation with $(M_i, Q_{M_i})$, for $i=1,2$. We prove below that $(L,Q_L)$ determines the genus class of the three other quadratic forms.  For this we need the concept of the Legendre composition of binary quadratic forms.  A convenient reference for the definitions and results we need on Legendre composition is the paper of Towber \cite{towber}.

\begin{defn}
A \textit{quadratic lattice} is a pair $(L,q)$ where $L$ is a lattice and $q$ is a integer valued quadratic form on $L$. We say that two quadratic lattices are isomorphic if there is a linear isomorphism between them which preserves the quadratic forms. If $\rk L=2$ we call $(L,q)$ a \textit{binary quadratic lattice}.
\end{defn}

\begin{defn}[{\cite[Defn. 2.1]{towber}}]
We say that the binary quadratic lattice $(M,q_M)$ is a \textit{Legendre composition} of the binary quadratic lattices $(L,q_L)$ and $(L^{\prime},q_{L^{\prime}})$ if there is a linear and surjective homomorphism $\mu:L\otimes L^{\prime}\rightarrow M$ such that
$$q_L(u)q_{L^{\prime}}(v)=q_M(\mu(u\otimes v)).$$
\end{defn}

Now, for $L, L^\bot\in\Rp(n)$ consider the maps
$$\begin{array}{cccc}
    \mu_1: & L\otimes L^{\bot} & \longrightarrow & \mathbf{B}_0(\mathbb{Z}) \\
     & v\otimes w & \longmapsto & v\overline{w}
  \end{array}$$
and
$$\begin{array}{cccc}
    \mu_2: & L\otimes L^{\bot} & \longrightarrow & \mathbf{B}_0(\mathbb{Z}) \\
     & v\otimes w & \longmapsto & \overline{v}w.
  \end{array}$$

The next result shows that the image of $\mu_i$ is the two dimensional lattice $M_i$ for $i=1,2.$

\begin{thm}\label{CompOrtLatt}
Let $L\in\Rp(n)$ with $n\in\mathbb{D}$ squarefree and congruent to 1  mod 4.
Let $M_1,M_2\subset\Z^3$ be the quadratic spaces associated to the two accidental CM-points as defined above.
For $i=1,2$, the 2-dimensional lattice $M_i$ is the image of $\mu_i$ and $(M_i,Q_{M_i})$ is a Legendre composition of $(L,Q_L)$ and $(L^{\bot},Q_{L^{\bot}})$.
\end{thm}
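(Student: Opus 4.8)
The plan is to carry out everything inside the quaternion algebra $\mathbf{B}(\Q)$, using that the reduced norm is multiplicative and that $L^\bot$ is the orthogonal complement for the bilinear form $B(x,y)=\tfrac12 Tr(x\overline y)$ attached to $Nr$. First I would check that $\mu_1,\mu_2$ are well-defined $\Z$-bilinear maps into $\mathbf{B}_0(\Z)$: for $p\in L$ and $q\in L^\bot$ one has $B(p,q)=0$, hence $Tr(p\overline q)=Tr(\overline p q)=0$, so $p\overline q$ and $\overline p q$ are integral and traceless. The Legendre composition identity is then immediate from multiplicativity of the norm: since $Q_L,Q_{L^\bot},Q_{M_i}$ are all restrictions of $Nr$, we get $Q_{M_i}(\mu_1(p\otimes q))=Nr(p\overline q)=Nr(p)Nr(q)=Q_L(p)Q_{L^\bot}(q)$, and likewise for $\mu_2$. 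Thus the entire content of the theorem is to identify the image of $\mu_i$ with the lattice $M_i=a_i(L)^\bot$.

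Second I would establish the inclusion $\mathrm{Im}(\mu_1)\subseteq M_1$, i.e. $p\overline q\perp a_1(L)$ for all $p\in L$, $q\in L^\bot$. Writing $L=\langle u,v\rangle$ and $a_1:=a_1(L)=u\overline v-\tfrac12 Tr(u\overline v)$, and using that $a_1$ is traceless (so $p\overline q\perp a_1$ is equivalent to $Tr(p\overline q\,a_1)=0$) together with $Tr(p\overline q)=0$, the claim reduces to $Tr(p\overline q\,u\overline v)=0$ for $p\in\{u,v\}$. For $p=u$ the identity $u\overline q\,u=-Nr(u)\,q$—valid because $q\perp u$ gives $\overline q u=-\overline u q$—yields $Tr(u\overline q\,u\overline v)=-Nr(u)Tr(q\overline v)=0$; for $p=v$, cyclicity of the trace gives $Tr(v\overline q\,u\overline v)=Tr(\overline v v\,\overline q u)=Nr(v)Tr(\overline q u)=0$. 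Bilinearity in $p$ gives the inclusion, and the mirror-image computation (left and right multiplication interchanged) gives $\mathrm{Im}(\mu_2)\subseteq M_2=a_2(L)^\bot$. At this stage $\mathrm{Im}(\mu_i)$ is a finite-index sublattice of $M_i$.

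The main obstacle is to show this index is $1$, and this is where the hypotheses that $n$ is squarefree and $n\equiv1\pmod4$ enter. Since $Nr(a_i(L))=n$ (Proposition \ref{prop:K}) is squarefree, $a_i(L)$ is primitive, so by the elementary observation that $Q_3(w)=n$ forces $\disc[Q_3|_{w^\bot}]=-4n$ we get $\disc Q_{M_i}=-4n$ and $\mathrm{covol}(M_i)=\sqrt n$. Fixing bases $u,v$ of $L$ and $u',v'$ of $L^\bot$, the same trace manipulations produce the Gram relations $\mathrm{Gram}(u\overline{u'},u\overline{v'})=Nr(u)\,\mathrm{Gram}(Q_{L^\bot})$ and $\mathrm{Gram}(u\overline{u'},v\overline{u'})=Nr(u')\,\mathrm{Gram}(Q_L)$, with determinants $Nr(u)^2 n$ and $Nr(u')^2 n$. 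Hence $\{u\overline{u'},u\overline{v'}\}$ spans a sublattice of $\mathrm{Im}(\mu_1)$ of covolume $Nr(u)\sqrt n$, forcing the index $g:=[M_1:\mathrm{Im}(\mu_1)]$ to divide $Nr(u)=Q_L(u)$. As $g$ is independent of the basis and every primitive vector of $L$ occurs as such a $u$, $g$ divides $\gcd\{Q_L(\xi):\xi\in L\ \text{primitive}\}$.

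Finally I would close the argument by showing this gcd equals $1$, which is the step that really uses $n\equiv1\pmod4$: an imprimitive form of discriminant $-4n$ with $n$ squarefree would have content $2$ and discriminant $4\cdot(-n)$ with $-n\equiv1\pmod4$, impossible when $n\equiv1\pmod4$; hence $Q_L$ is primitive and represents coprime values at primitive vectors, so $g=1$ and $\mu_1$ maps onto $M_1$. The identical reasoning handles $\mu_2$. I expect the genuinely delicate part to be this covolume-and-primitivity bookkeeping rather than the algebraic identities, and it is precisely what breaks when $n\equiv3\pmod4$, matching the caveat in the text about imprimitive binary quadratic forms.
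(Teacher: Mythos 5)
Your proof is correct, and its overall skeleton matches the paper's: both arguments establish well-definedness of $\mu_i$, the inclusion $\mathrm{Im}(\mu_1)\subseteq M_1=a_1(L)^\bot$ by trace manipulations on a basis of $L$, and the Legendre composition identity directly from multiplicativity of the reduced norm. Where you genuinely diverge is the final (and only delicate) step, showing $\mathrm{Im}(\mu_i)=M_i$. The paper gets this almost for free from two citations: the Second Conclusion of art.\ 235 of Gauss, which says the discriminant of a Legendre composite divides $-4n$, combined with $\disc(Q_{M_1})=-4n$ (from Theorem \ref{GaussThMain} and primitivity of $a_1(L)$); since $\disc(Q_{M_1'})=[M_1:M_1']^2\cdot(-4n)$, the index must be $1$. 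You instead give a self-contained covolume computation: the Gram relations $\langle u\overline{u'},u\overline{v'}\rangle \mapsto Nr(u)\cdot\mathrm{Gram}(Q_{L^\bot})$ show the index divides $Q_L(u)$ for every primitive $u\in L$, hence divides the content of $Q_L$, which is $1$ because a form of discriminant $-4n$ with $n$ squarefree and $n\equiv 1\pmod 4$ must be primitive. Your route costs more bookkeeping but buys two things: it avoids the appeal to Gauss's composition theory for this step, and it isolates exactly where the hypothesis $n\equiv 1\pmod 4$ is used (the paper's version hides this inside the discriminant statement for $M_1$ and the citation of art.\ 235). Both arguments are sound; yours could be spliced in as an alternative proof of the surjectivity claim with no change to the rest.
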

\begin{proof}
Write $L=\langle v_1,v_2\rangle$ and denote by $M_1^{\prime}$ the image of $\mu_1$.
Observe that the map $w\mapsto\mu_1(v_2\otimes w)$ from $L^{\bot}$ to $\mathbf{B}_0(\mathbb{Z})$ is injective, therefore $\rk M_1^{\prime}$ is equal to $2$ or $3$. On the other hand, if $w\in L^{\bot}$, then
\begin{align*}
\langle a_1(L),v_2\overline{w}\rangle & =-\frac{1}{2}Tr(v_1\overline{v_2}v_2\overline{w}-\frac{1}{2}Tr(v_1\overline{v_2})v_2\overline{w}) \\
 & =-\frac{1}{2}Nr(v_2)Tr(v_1\overline{w})+\frac{1}{4}Tr(v_1\overline{v_2})Tr(v_2\overline{w})=0
\end{align*}
Using that $-a_1(L)=v_2\overline{v_1}-\tfrac{1}{2}Tr(v_1\overline{v_2})$ we obtain $\langle a_1(L),v_1\overline{w}\rangle =0$. Therefore $M_1^{\prime}\subset M_1$ and $\rk M_1^{\prime}=2$. If $v\in L$ and $w\in L^{\bot}$, then
$$Q_L(v)Q_{L^{\bot}}(w)=Nr(v\overline{w})=Nr(\mu_1(v\otimes w)).$$
Therefore $M_1^{\prime}$ is a Legendre composition of $(L,Q_L)$ and $(L^{\bot},Q_{L^{\bot}})$.
By the Second Conclusion in art. 235 of \cite{Gauss}, $\disc(Q_{M_1^{\prime}})$ divides $-4n=\disc(Q_L)=\disc(Q_{L^{\bot}})$. On the other hand, by Theorem \ref{GaussThMain}, $\disc(Q_{M_1})=-4n$. This fact together with $M_1^{\prime}\subset M_1$ implies that $M_1^{\prime}=M_1$. Analogously we prove that the image of $\mu_2$ is $M_2$.
\end{proof}

\begin{thm}\label{Repres}
If $q(x,y)=ax^2+2bxy+cy^2$ is a quadratic form with $a,b,c\in\mathbb{Z}$, $a,c>0$, $ac-b^2=d>0$, $d$ squarefree and $d\not\equiv 7\pmod 8$, then $q$ is represented by $Q_4$.
\end{thm}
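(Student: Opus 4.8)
The plan is to prove the statement by a local-to-global argument, leveraging the fact recorded in Section \ref{sec:formula} that the genus of $Q_4 = Q_{\id}$ consists of a single class. I would first invoke the classical principle (Siegel) that a positive definite integral binary form $q$ is represented by \emph{some} form in the genus of $Q_4$ precisely when $q$ is represented by $Q_4$ over $\Z_p$ for every prime $p$ and over $\R$; since $\gen(\id)$ has class number one, representation by the genus coincides with representation by $Q_4$ itself. Thus the theorem reduces to verifying local representability of $q$, whose matrix is $\bigl(\begin{smallmatrix} a & b \\ b & c\end{smallmatrix}\bigr)$ of determinant $d$, at every place. Note also that, since $d$ is squarefree, any lattice $L=\langle u,v\rangle\subset\Z^4$ realizing $Q_4|_L\cong q$ is automatically primitive, so a successful embedding lands in $\gr$.

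The archimedean and odd finite places cause no trouble and impose no condition on $d$. Over $\R$ the form $Q_4$ is positive definite of rank $4>2$, hence represents every positive definite binary form. For an odd prime $p$ the lattice $(\Z_p^4,Q_4)$ is unimodular; diagonalizing $q$ over $\Z_p$ as $\langle\alpha\rangle\perp\langle\beta p^{v}\rangle$ with units $\alpha,\beta$ and $v=v_p(d)\in\{0,1\}$ (squarefreeness of $d$), I would represent the unit $\alpha$ by a vector $u$ with $Q_4(u)=\alpha$, split off $\langle\alpha\rangle$, and represent the remaining entry $\beta p^{v}$ in the orthogonal complement. That complement is a unimodular ternary $\Z_p$-lattice, which for odd $p$ is isotropic and therefore universal, so $\beta p^{v}$ is represented and one obtains $q\hookrightarrow Q_4$ over $\Z_p$.

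The crux is the place $p=2$, and this is exactly where the hypothesis $d\not\equiv 7\pmod 8$ enters: it is the precise condition under which every binary $\Z_2$-form of determinant $d$ embeds into $\langle 1,1,1,1\rangle$ over $\Z_2$. The quaternionic reformulation of Section \ref{sec:formula} makes both the obstruction and its remedy transparent: representing $q$ means finding $u,v\in\mathbf{B}(\Z)$ with $Nr(u)=a$, $Nr(v)=c$, and $Tr(u\overline v)=2b$, in which case the traceless part $w=u\overline v-\tfrac12 Tr(u\overline v)\in\mathbf{B}_0(\Z)\cong\Z^3$ satisfies $Nr(w)=ac-b^2=d$, exhibiting $d$ as a sum of three squares. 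Conversely, the choice $a=1$, $b=0$, $c=d$ forces $u$ to be a unit vector and $v$ to lie in the ternary lattice $u^\bot$ with $Q_3(v)=d$, which by Legendre's theorem is impossible exactly when $d\equiv 7\pmod 8$; this shows the hypothesis cannot be dropped. To finish I would run through the finitely many $\Z_2$-classes of binary forms of determinant $d$---unimodular when $d$ is odd, and of Jordan type $\langle\text{unit}\rangle\perp\langle 2\cdot\text{unit}\rangle$ when $d$ is even---and, organizing the cases by $d\bmod 8$, exhibit for each an explicit pair of integer vectors with the prescribed Gram matrix, the three-squares input guaranteeing their existence whenever $d\not\equiv 7\pmod 8$. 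I expect this $2$-adic verification to be the main obstacle: because $(\Z_2^4,Q_4)$ is \emph{anisotropic} (it is the norm form of the $2$-adic quaternion division algebra), the universality argument used at odd primes fails, and one must instead match the $\Z_2$-genus symbol of $q$ against the even/odd type of $\langle 1,1,1,1\rangle$. The computation should be routine but case-heavy, and I would isolate it as a short lemma so that Theorem \ref{Repres} can be applied cleanly to the accidental CM-point forms in Theorem \ref{CompOrtLatt}.
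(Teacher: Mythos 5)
Your strategy is genuinely different from the paper's: the paper disposes of Theorem \ref{Repres} in one line by citing formula (4) of Taussky, whereas you propose a self-contained local--global argument. The skeleton of your argument is correct. Local representability at every place does imply representation by some class in the genus of $\id$ (this is the classical theorem of O'Meara, 102:5), and since that genus has class number one this is representation by $Q_4$ itself; the real place is trivial; and your treatment of odd $p$ (split off a unit vector, observe that the unimodular ternary complement over $\Z_p$ is isotropic and hence universal, and use squarefreeness of $d$ to keep the second Jordan constituent at worst $p$-modular) is sound. The one step you have not actually carried out is the one that carries the entire content of the theorem: the verification at $p=2$. ``I would run through the finitely many $\Z_2$-classes'' is a plan, not a proof, and it is exactly there that the hypothesis $d\not\equiv 7\pmod 8$ must be consumed --- your quaternionic discussion of $\langle 1,d\rangle$ only shows the hypothesis is \emph{necessary}, not that it suffices for every $\Z_2$-class of determinant $d$. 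For what it is worth, the check does succeed: when $d$ is odd the local form is either the even unimodular lattice $2x^2+2xy+2y^2$ (embedded by $(1,1,0,0),(1,0,1,0)$) or an odd diagonal $\langle\alpha,\beta\rangle$ with $\alpha\beta\equiv d$ modulo squares, and one verifies that the orthogonal complement in $\Z_2^4$ of a vector of unit norm $\alpha$ represents a unit $\beta$ precisely when $\alpha\beta\not\equiv 7\pmod 8$, and represents $2\beta$ for every unit $\beta$, which settles the case $d\equiv 2\pmod 4$. So your proof is completable and would make the result independent of the Taussky citation, at the cost of writing out that $2$-adic lemma; as submitted, however, the decisive computation is still missing.
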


\begin{proof}
This follows from formula (4) in \cite{Taussky}.
\end{proof}

If $(L,q_L)$ and $(M,q_M)$ are quadratic lattices, we denote by $q_L\oplus q_M$ the quadratic form in $L\oplus M$ defined by $(q_L\oplus q_M)(v+w):=q_L(v)+q_M(w)$ for $v\in L$ and $w\in M$.

\begin{lem}\label{LemRepres}
Let $(L,q_L)$, $(L^{\prime},q_{L^{\prime}})$, $(M,q_M)$ and $(M^{\prime},q_{M^{\prime}})$ be positive definite binary quadratic spaces such that
\begin{itemize}
\item $(L^{\prime},q_{L^{\prime}})$ is in the same genus class of $(L,q_L)$ and
\item  $(M^{\prime},q_{M^{\prime}})$ is in the same genus class of $(M,q_M)$.
\end{itemize}
If $(L\oplus M, q_L\oplus q_M)$ is represented by $(\mathbb{Z}^4,Q_4)$, then so is $(L^{\prime}\oplus M^{\prime}, q_{L^{\prime}}\oplus q_{M^{\prime}})$.
\end{lem}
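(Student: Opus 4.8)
The plan is to reduce the assertion to a local-global principle for representations, exploiting the fact, already invoked in Section~\ref{sec:formula}, that the genus of $Q_4$ consists of a single class (cf. \cite[Chap.~9]{Cas78}).

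First I would observe that $(L'\oplus M',\,q_{L'}\oplus q_{M'})$ lies in the \emph{same genus} as $(L\oplus M,\,q_L\oplus q_M)$, viewing both as quaternary quadratic lattices. The genus is a purely local invariant: since $(L',q_{L'})$ is in the genus of $(L,q_L)$, the two are isometric over $\R$ and over $\Z_p$ for every prime $p$, and similarly for $(M',q_{M'})$ and $(M,q_M)$. Forming orthogonal direct sums of these local isometries shows that $q_{L'}\oplus q_{M'}$ and $q_L\oplus q_M$ are isometric over $\R$ and over every $\Z_p$, so the two rank-$4$ forms share a genus.

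Next I would translate representability by $Q_4$ into local conditions. If $(L\oplus M,\,q_L\oplus q_M)$ embeds isometrically into $(\Z^4,Q_4)$, then it does so over $\R$ and over $\Z_p$ for every $p$. By the previous paragraph these local representation data coincide with those of $q_{L'}\oplus q_{M'}$, so $(L'\oplus M',\,q_{L'}\oplus q_{M'})$ is represented by $Q_4$ over $\R$ and over every $\Z_p$. The final step is to pass from this everywhere-local representability back to a global integral representation for the primed form. Here I would invoke the local-global principle: for a positive definite form $g$, the genus of $g$ represents a form $f$ if and only if $g$ represents $f$ over $\R$ and over every $\Z_p$. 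Since the genus of $Q_4$ is a single class, ``the genus of $Q_4$ represents $f$'' is the same as ``$Q_4$ represents $f$,'' and applying the principle with $f=q_{L'}\oplus q_{M'}$ yields the desired integral representation.

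The one point demanding care---and the main obstacle---is the validity of this last step, since for representations (as opposed to isometries of full lattices) spinor-genus obstructions can in general intervene between the everywhere-local condition and a genuine integral representation. These do not arise here precisely because the genus of $Q_4$ is a single class, so that genus, spinor genus, and class coincide and no such obstruction can appear; I would cite the representation theorem in this one-class setting (e.g.\ \cite{Cas78}) rather than reprove it. I would also record that all four lattices are positive definite, matching $Q_4$ and guaranteeing that the cited finiteness and representation results apply.
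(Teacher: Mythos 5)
Your proof is correct and follows essentially the same route as the paper's, which simply notes that $Q_4$ is alone in its genus and appeals to the local--global principle. Your version is in fact more carefully stated: the paper cites ``Hasse--Minkowski,'' whereas the step actually needed is the integral local--global principle for representation by a genus (everywhere-local representability implies representability by some form in the genus), which you correctly identify and which collapses to representability by $Q_4$ itself precisely because its genus contains a single class.
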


\begin{proof}
Since $Q_4$ is the only form in its genus class, the result follows by the Hasse-Minkowski theorem.
\end{proof}

We continue to let $n\in\mathbb{Z}_{>0}$ be squarefree and equivalent to 1 mod 4 and $\cl(-4n)$ be  the abelian group of proper classes of positive primitive binary quadratic forms of discriminant $-4n$, with the group law given by Gaussian composition. In the following we will use the concepts of Gaussian composition and Legendre composition of binary quadratic forms as explained in \cite{towber} (cf.
\cite[Defn. 2.1, Defn. 2.2 and pp. 45-46]{towber}). Observe in particular that a Gaussian composition of two forms is a Legendre composition.  In the following lemma we discuss conversely the relation between Legendre composition and Gaussian composition.  If $q(x,y)=ax^2+bxy+cy^2$ is a binary quadratic form, then we define $q^{op}(x,y)=ax^2-bxy+cy^2$.

\begin{lem}\label{LegenGauss}
If $q_1$, and $q_2$ are primitive binary quadratic forms of the same discriminant and $q_3$ is a positive binary quadratic form which is a Legendre composition of $q_1$ and $q_2$, then one of the following happens:
\begin{enumerate}
  \item $q_3$ is the Gaussian composition of $q_1$ and $q_2$,
  \item $q_3^{op}$ is the Gaussian composition of $q_1$ and $q_2$,
  \item $q_3$ is the Gaussian composition of $q_1^{op}$ and $q_2$ or
  \item $q_3$ is the Gaussian composition of $q_1$ and $q_2^{op}$.
\end{enumerate}
\end{lem}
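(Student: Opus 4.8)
The plan is to analyze the relationship between Legendre composition and Gaussian composition by reducing to the level of the associated lattices and their orientations. Recall that the Gaussian composition of $q_1$ and $q_2$ is the \emph{oriented} Legendre composition: Towber's framework makes clear that a Legendre composition $q_3$ of $q_1$ and $q_2$ determines Gaussian composition only up to the ambiguity introduced by choices of orientation on the underlying lattices. The key input I would invoke is Towber's result \cite[Defn. 2.2 and pp. 45--46]{towber} characterizing when a Legendre composition coincides with the Gaussian composition, together with the fact that for a binary quadratic form $q$, reversing orientation corresponds exactly to passing from $q$ to $q^{op}$ (the form obtained by the involution $\binom{x}{y}\mapsto\binom{x}{-y}$, or equivalently taking the inverse class in $\cl(-4n)$).

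\medskip
\noindent
\textbf{First} I would fix $q_1$ and $q_2$ as primitive forms of discriminant $-4n$ and recall that the Gaussian composition $[q_1]\cdot[q_2]$ is a single well-defined class in $\cl(-4n)$. The surjective homomorphism $\mu\colon L\otimes L' \to M$ defining the Legendre composition, combined with the multiplicativity $q_L(u)q_{L'}(v)=q_M(\mu(u\otimes v))$, forces $q_3=q_M$ to lie in the \emph{genus} of $[q_1]\cdot[q_2]$ — indeed any Legendre composition has the correct discriminant (as already used in the proof of Theorem \ref{CompOrtLatt} via art.\ 235 of \cite{Gauss}) and the correct genus characters, since the local data of $q_3$ is determined by those of $q_1,q_2$. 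The genus of $[q_1]\cdot[q_2]$ in $\cl(-4n)$ consists precisely of the classes $[q_1]\cdot[q_2]\cdot g^2$ as $g$ ranges over $\cl(-4n)$; but since $q_3$ is obtained as an actual composition and not an arbitrary genus member, the ambiguity is in fact much more restricted.

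\medskip
\noindent
\textbf{Next} I would pin down the orientation ambiguity explicitly. A Legendre composition does not fix an identification of $M$ with the tensor construction as an \emph{oriented} lattice; changing the orientation of any one of $L$, $L'$, or $M$ replaces the corresponding form by its opposite and correspondingly replaces the resulting Gaussian-composition class by its inverse. Since $[q^{op}]=[q]^{-1}$ in $\cl(-4n)$, the four listed alternatives are exactly the four sign patterns that survive after accounting for the relation $[q_1^{op}]\cdot[q_2^{op}]=([q_1]\cdot[q_2])^{op}$, which collapses what would naively be eight orientation choices down to the four enumerated cases. Concretely: if $q_3$ itself is $[q_1]\cdot[q_2]$ we are in case (1); flipping the orientation on $M$ gives $q_3^{op}=[q_1]\cdot[q_2]$, case (2); flipping the orientation on $L$ (resp.\ $L'$) gives $q_3=[q_1^{op}]\cdot[q_2]$ (resp.\ $[q_1]\cdot[q_2^{op}]$), cases (3) and (4).

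\medskip
\noindent
\textbf{The main obstacle} will be verifying rigorously that these orientation choices are the \emph{only} ambiguity — i.e.\ that a Legendre composition cannot produce a class outside this list of four. This requires a careful reading of Towber's correspondence between the bilinear map $\mu$ and the Gaussian composition law, ensuring that the surjectivity and norm-multiplicativity conditions force $M$ (up to $\GL_2(\Z)$-equivalence) to be one of these four oriented tensor products and nothing more exotic. I expect this to follow from \cite[pp.\ 45--46]{towber}, where Towber shows that a Legendre composition, once a consistent orientation is chosen, \emph{is} a Gaussian composition; the content of the present lemma is then the bookkeeping of which orientation was in fact chosen. The remaining steps are routine: translate the opposite-form relation into the inverse in $\cl(-4n)$ and check that the four cases exhaust the possibilities, which I would present as a short case analysis rather than a computation.
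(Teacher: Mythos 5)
Your proposal is correct and follows essentially the same route as the paper: the paper's entire proof is the citation ``Follows from \cite[Theorem 2.2]{towber}'', and your argument is just an unpacking of why that citation works, namely that Legendre composition determines Gaussian composition only up to the orientation choices on the three lattices, which collapse from eight to the four listed cases via $[q_1^{op}]\cdot[q_2^{op}]=([q_1]\cdot[q_2])^{op}$. The only inessential difference is your opening digression about genera, which belongs to Corollary \ref{LegenGaussCor} rather than to this lemma.
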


\begin{proof}
Follows from \cite[Theorem 2.2]{towber}.
\end{proof}

Observe that the quadratic forms $q$ and $q^{op}$ are in the same genus class,
indeed the product of $q$ and $q^{op}$ is the identity element in the class group $\cl(-4n).$  In particular the genus class of a binary quadratic lattice is well defined.  It follows from \cite[Chap. 14, Corollary to Theorem 3.1]{Cas78} that the genera of $\cl(-4n)$ are precisely the cosets of $\cl(-4n)$ modulo $\cl(-4n)^2$.
Therefore the Gaussian composition in $\cl(-4n)$ descends to a composition of genus classes and we obtain the following.

\begin{cor}\label{LegenGaussCor}
If $q_1,q_2,q_3$ are as in Lemma \ref{LegenGauss}, then $q_3$ is in the  genus class of the Gaussian  composition of $q_1$ and $q_2$.
\end{cor}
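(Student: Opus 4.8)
The plan is to argue by a direct case analysis on the four alternatives furnished by Lemma~\ref{LegenGauss}, exploiting the two structural facts recorded just above the corollary: that Gaussian composition descends to a well-defined group law on genus classes, and that a form and its opposite always lie in the same genus. Writing $[q]$ for the image of a primitive form $q$ in the quotient $\cl(-4n)/\cl(-4n)^2$ of genus classes, the first fact gives $[q_1\cdot q_2]=[q_1][q_2]$, where $\cdot$ denotes Gaussian composition; the second gives $[q]=[q^{op}]$. The latter was already observed to follow from the relation that $q\cdot q^{op}$ is the identity in $\cl(-4n)$, so that $q^{op}=q^{-1}$ in the class group, combined with the fact that $\cl(-4n)/\cl(-4n)^2$ is an elementary abelian $2$-group, in which $[q^{-1}]=[q]^{-1}=[q]$.

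With these two identities in hand, I would run through the four cases of Lemma~\ref{LegenGauss} in turn. In case (1), $q_3$ is literally the Gaussian composition of $q_1$ and $q_2$, so there is nothing to prove. In case (2), $q_3^{op}=q_1\cdot q_2$, whence $[q_3]=[q_3^{op}]=[q_1][q_2]$. In case (3), $q_3=q_1^{op}\cdot q_2$, so $[q_3]=[q_1^{op}][q_2]=[q_1][q_2]$, using $[q_1^{op}]=[q_1]$. Case (4) is handled identically with the roles of the two factors interchanged, using $[q_2^{op}]=[q_2]$. In every case $[q_3]$ equals the genus class $[q_1][q_2]$ of the Gaussian composition of $q_1$ and $q_2$, which is precisely the assertion of the corollary.

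There is no substantive obstacle here: the entire content has been front-loaded into the two preliminary facts, namely that the genera of $\cl(-4n)$ are the cosets modulo $\cl(-4n)^2$ and that opposite forms share a genus. The only point requiring a modicum of care is isolating the identity $[q]=[q^{op}]$ and confirming that Gaussian composition is compatible with the passage to the quotient by squares; once those are in place, the corollary reduces to the short bookkeeping over the four cases of Lemma~\ref{LegenGauss} sketched above.
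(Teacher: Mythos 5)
Your argument is correct and is essentially the paper's own: the corollary is deduced exactly from the two facts recorded just before it (that $q$ and $q^{op}$ lie in the same genus because $q^{op}=q^{-1}$ in $\cl(-4n)$ and genera are cosets of $\cl(-4n)^2$, and that Gaussian composition therefore descends to genus classes), followed by the four-case check against Lemma \ref{LegenGauss}. You have simply written out the bookkeeping that the paper leaves implicit.
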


Finally we can prove the main theorem of this section which describes the possibilities for the pairs of binary quadratic lattices $(L,Q_L)$ and $(L^{\bot},Q_{L^{\bot}})$ for $L\in\mathcal{R}_{24}(d)$.

\begin{thm}\label{WhenExPairLat}
Let $n\in\mathbb{D}$ be a squarefree integer, $n\equiv 1\pmod 4$. Let $q_1$ and $q_2$ be two positive binary quadratic forms of discriminant $-4n$. Then there exists $L\in\mathcal{R}_{24}(n)$ with $[Q_L]=[q_1]$ and $[Q_{L^\bot}]=[q_2]$ if and only if the Legendre compositions of $q_1$ and $q_2$ belong to the genus class $\mathcal{G}_n$ defined in Theorem \ref{GaussThMain}.
\end{thm}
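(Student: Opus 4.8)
The plan is to prove both directions by connecting the lattice $L\in\mathcal{R}_{24}(n)$ to the accidental CM-point lattices $M_1,M_2\subset\Z^3$ via the Legendre composition result of Theorem \ref{CompOrtLatt}, and then to apply Gauss's genus-class result of Theorem \ref{GaussThMain}. First I would recall from Theorem \ref{CompOrtLatt} that for $L\in\mathcal{R}_{24}(n)$ the lattice $(M_i,Q_{M_i})$ is a Legendre composition of $(L,Q_L)$ and $(L^{\bot},Q_{L^{\bot}})$, and that $M_i=a_i(L)^{\bot}$ with $Q_3(a_i(L))=Nr(a_i(L))=n$ by Proposition \ref{prop:K}(1). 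Thus $[Q_{M_i}]=\mathcal G(a_i(L))\in\mathcal G_n$. This is the crucial bridge: the Legendre composition of the two geometric forms $Q_L$ and $Q_{L^{\bot}}$ lands, via the Klein map, on an accidental CM-form whose class lies in the single genus class $\mathcal G_n$.

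For the forward direction, I would take $L\in\mathcal{R}_{24}(n)$ with $[Q_L]=[q_1]$ and $[Q_{L^{\bot}}]=[q_2]$. By Lemma \ref{LegenGauss} and Corollary \ref{LegenGaussCor}, any Legendre composition of $q_1$ and $q_2$ lies in the genus class of the Gaussian composition of $q_1$ and $q_2$ (up to replacing a factor by its $q^{op}$, which does not change the genus class since $q$ and $q^{op}$ are in the same genus). On the other hand, the specific Legendre composition realized geometrically by $M_i$ has class in $\mathcal G_n$. Since all Legendre compositions of $q_1,q_2$ share one genus class by Corollary \ref{LegenGaussCor}, that common genus class must be $\mathcal G_n$, giving the ``only if'' direction.

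For the converse, suppose $q_1,q_2$ are positive binary quadratic forms of discriminant $-4n$ whose Legendre compositions lie in $\mathcal G_n$. I would first produce an element $v\in\mathcal{R}_3(n)$ with $\mathcal G(v)=[q_3]$, where $q_3$ is the Gaussian composition of $q_1$ and $q_2$; this is possible because $\mathcal G_n$ is the full single genus class by Theorem \ref{GaussThMain} and $[q_3]\in\mathcal G_n$ by hypothesis. Setting $a_1=v$ (a traceless norm-$n$ quaternion) gives $M_1=a_1^{\bot}$ with $[Q_{M_1}]=[q_3]$. The task is then to exhibit a lattice $L$ whose geometric forms are exactly $[q_1],[q_2]$ and whose Klein image has $a_1(L)=a_1$; here I would use Theorem \ref{Repres} to realize $q_1\oplus q_2$ as a sublattice of $(\Z^4,Q_4)$ — the discriminant and congruence hypotheses $n$ squarefree, $n\equiv 1\pmod 4$ (so $n\not\equiv 7\pmod 8$) are exactly what Theorem \ref{Repres} and membership in $\mathbb D$ require — and then check via Lemma \ref{EquaDisc} and the Klein-map formulas \eqref{eq:k} that the resulting $L$ is primitive of the correct discriminant with the prescribed geometric forms.

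The hard part will be the converse, specifically controlling the freedom in the representation coming from Lemma \ref{LemRepres} and Theorem \ref{Repres}: these guarantee a representation of $q_1\oplus q_2$ by $Q_4$, but I must ensure the resulting two-dimensional lattice $L$ is \emph{primitive} and that its orthogonal complement realizes $[q_2]$ rather than merely some form in the genus. The genus-class bookkeeping — tracking how Legendre versus Gaussian composition interacts with the $\mathcal G_n$ condition, and invoking Lemma \ref{LemRepres} to pass between genus-equivalent representatives since $Q_4$ is alone in its genus — is where the argument must be assembled carefully. I expect the orthogonality and primitivity verification, reducing everything back through the Klein map's defining relations \eqref{eq:k} and Proposition \ref{prop:K}, to be the main technical obstacle.
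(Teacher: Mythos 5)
Your forward direction is essentially the paper's argument: Theorem \ref{CompOrtLatt} exhibits one particular Legendre composition of $Q_L$ and $Q_{L^{\bot}}$ as an accidental CM-form $Q_{M_i}$, whose class lies in $\mathcal G_n$, and Corollary \ref{LegenGaussCor} then fixes the genus class of every Legendre composition. That half is fine.

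The converse, which you yourself flag as the main obstacle, has a genuine gap, and the route you sketch will not close it. First, Theorem \ref{Repres} only asserts that a \emph{single} binary form of squarefree determinant $d\not\equiv 7\pmod 8$ is represented by $Q_4$; it says nothing about representing the quaternary form $q_1\oplus q_2$, so the step ``use Theorem \ref{Repres} to realize $q_1\oplus q_2$ as a sublattice of $(\Z^4,Q_4)$'' is a misapplication. Second, you never use the hypothesis on the Legendre compositions of $q_1$ and $q_2$ to control the orthogonal complement, and that is exactly the pivot of the argument. The paper's converse runs: apply Theorem \ref{Repres} to $q_1$ alone to get a primitive two-dimensional $N\subset\Z^4$ with $[Q_N]=[q_1]$; the already-proved forward direction says the Legendre compositions of $Q_N$ and $Q_{N^{\bot}}$ lie in $\mathcal G_n$, and since the genera form the group $\cl(-4n)/\cl(-4n)^2$ one can cancel $[q_1]=[Q_N]$ against the hypothesis to conclude that $Q_{N^{\bot}}$ and $q_2$ lie in the same genus; finally $Q_N\oplus Q_{N^{\bot}}$ is tautologically represented by $Q_4$ via $N\oplus N^{\bot}\subset\Z^4$, so Lemma \ref{LemRepres} lets one replace the two summands by the genus-equivalent pair $q_1,q_2$ and obtain the desired $L$. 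Your alternative plan of inverting the Klein map with a prescribed $a_1(L)=v$ is a harder inverse problem that you do not carry out, and it becomes unnecessary once the genus of $Q_{N^{\bot}}$ is pinned down as above.
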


\begin{proof}
If $L\in\mathcal{R}_{24}(n)$, then by Theorem \ref{CompOrtLatt} and Corollary \ref{LegenGaussCor}  we conclude that the Legendre compositions of $(L,Q_L)$ and $(L^{\bot},Q_{L^{\bot}})$ are in the genus class $\mathcal{G}_n$.

Conversely, consider $q_1$ and $q_2$ two positive binary quadratic forms of discriminant $-4n$ with a Legendre composition in the genus class $\mathcal{G}_n$. By Theorem \ref{Repres} there exists a 2-dimensional primitive sublattice $N$ of $\mathbb{Z}^4$ with $[q_1]=[Q_N].$ 
As the Legendre compositions of $q_1$ and $q_2$ belong to the genus class $\mathcal{G}_n$, we obtain that $Q_{N^{\bot}}$ and $q_2$ are in the same genus class. By Lemma \ref{LemRepres} we conclude that there exists $L$ a $2$-dimensional primitive sublattice of $\mathbb{Z}^4$ such that $[Q_L]=[q_1]$ and $[Q_{L^{\bot}}]=[q_2].$  This completes the proof.
\end{proof}

\section{Further questions}
\label{sec:further}

In this paper we have studied the number of two dimensional sublattices of $\Z^4$ of a fixed discriminant and shown how these numbers arise in both nonmetaplectic and metaplectic Eisenstein on $\GL_4.$  The conceptual background linking the two Eisenstein series on different groups is provided by a conjecture of Jacquet motivated by the formalism of the relative trace formula.  From another perspective, Aka, Einsiedler and Wieser naturally associate to a plane in $\Q^4$ four CM points and prove various equidistribution results for the plane together with the CM points.  We list several open problems and potential directions for generalizations suggested by these two perspectives.

\begin{itemize}
\item  Let $n$ be squarefree and $\cl(D)$ the class group of the ring of integers of $\Q(\sqrt {-n}).$  (Hence $D=-n$ or $-4n$.)   Aka, Einsiedler and Wieser note that the Klein map provides a finite quotient of $\Rp(n)$ with the structure of $\cl(D)^2$-torsor, \cite{AEW19}[Section 8].  Given Bhargava's \cite{bhargava} identification of $\cl(D)^2$ with equivalence classes of $2\times 2\times 2$ integer cubes, it would be interesting to directly construct an action of cubes of hyperdeterminant $D$ on  $\Rp(D)$.
\item Another related setting where we expect to see a natural action of a class group is that of mutually orthogonal triples of planes in $\Z^6$.  In this case the multiple Dirichlet series which arise as the Fourier-Whittaker coefficients of the metaplectic double cover Eisenstein series on $\GL_6$ were conputed in \cite{chinta-biquad} and shown to involve class numbers of biquadratic extensions of $\Q.$
\item Let $P$ be the Siegel parabolic of the symplectic group $\Sp_4(\R).$  That is, $P$ is a maximal parabolic subgroup of  $\Sp_4(\R)$ stabilizing a two dimensional Lagrangian subspace.  Thus $P\backslash\Sp_4(\R)$ parametrizes Lagrangian planes.  Unpublished computations of Chinta, Hundley and Offen again show a correspondence between a unitary period of the associated Eisenstein series and a metaplectic Eisenstein series on the double cover of $\GL_4$.  The computation of this period should be amenable to the methods of this paper.  It would also be interesting to pursue analogues of the results of Aka, Einsiedler and Wieser and study the joint distribution of Lagrangian planes in $\Q^4$ and their associated four-tuples of CM points.
\item Our proof of Proposition \ref{prop:r} relating $\rp(D)$ to $\rv(D)^2$ is a straightforward extension of the ideas in Corollary 2.9 of \cite{AEW19} which shows that
\begin{equation}
  \label{eq:10}
\rp(D)=D^{1+o(1)} \mbox{ for $D\in\mathbb{D}$}.
\end{equation}
The authors remark that they are not aware of counting results like (\ref{eq:10}) for rational subspsaces of dimension $k$ and discriminant $D$ in $\Q^n.$  Guided by Jacquet's conjecture one could attempt to obtain general results of this form using the methods of this paper.  The Whittaker coefficients of the relevant metaplectic Eisenstein series have been studied in Chinta-Gunnells \cite{cg-inv} and have been shown to be Dirichlet series built out of Dirichlet $L$-functions.
\end{itemize}

\bibliographystyle{amsplain}
\bibliography{sources}

\end{document}